%
\documentclass[10pt]{article}
\usepackage{amsmath,amssymb,amsfonts,theorem}
\usepackage{graphicx,epsfig,color,psfrag} %
\usepackage{subfig}
\graphicspath{{./Figures/}}


\newcommand{\until}[1]{\{1,\dots, #1\}}
\newcommand{\subscr}[2]{#1_{\textup{#2}}}
\newcommand{\supscr}[2]{#1^{\textup{#2}}}
\newcommand{\setdef}[2]{\{#1 \, | \; #2\}}
\newcommand{\map}[3]{#1: #2 \rightarrow #3}
\newcommand{\union}{\operatorname{\cup}}
\newcommand{\intersection}{\ensuremath{\operatorname{\cap}}}
\newcommand{\intersect}{\intersection}

\newcommand{\bigintersect}{\bigcap}
\newcommand{\eps}{\varepsilon}
\newcommand{\degs}{^\circ\!}

\newcommand{\real}{\ensuremath{\mathbb{R}}}

\newcommand{\realnonnegative}{\ensuremath{\mathbb{R}}_{\geq0}}
\renewcommand{\natural}{{\mathbb{N}}}

\newtheorem{theorem}{Theorem}[section]
\newtheorem{proposition}[theorem]{Proposition}

{\theorembodyfont{\rmfamily} 
\newtheorem{remark}[theorem]{Remark}

}

\newcommand{\bl}{\textup{BL}}
\newcommand{\tu}{\textup{TU}}
\newcommand{\vc}{\supscr{v}{a}}
\newcommand{\vr}{\supscr{v}{r}}

\newcommand{\rreg}[2]{\mathcal{R}_{#1}^{#2}}
\newcommand{\creg}[2]{\mathcal{A}_{#1}^{#2}}

\newcommand{\vmax}{\subscr{v}{max}}
\newcommand{\card}[1]{{|#1|}}

\newcommand{\co}{\operatorname{\overline{co}}}
\newcommand{\Lci}{L}
\newcommand{\ai}{\textup{AI}}
\newcommand{\angleTol}{\subscr{\eps}{angle}}
\newcommand{\alphac}{\subscr{\alpha}{a}}
\newcommand{\alphar}{\subscr{\alpha}{r}}
\newcommand{\Rsr}{\subscr{R}{sr}}

\newcommand{\fc}{\subscr{f}{a}}
\newcommand{\fr}{\subscr{f}{r}}
\newcommand{\Fc}{\subscr{F}{a}}
\newcommand{\Fr}{\subscr{F}{r}}

\newcommand{\V}{\mathcal{V}} 
\newcommand{\K}{\mathcal{K}} 
\newcommand{\closest}[1]{{\textup{closest}_{#1}}}
\newcommand{\CoincSet}{S}
\newcommand{\DiscSet}{D}


\title{Effects of Anisotropic Interactions on the Structure of Animal Groups}

\author{Emiliano Cristiani\thanks{CEMSAC, Universit\`a di Salerno, Italy and IAC-CNR, Rome, Italy. E-mail: \texttt{emiliano.cristiani@gmail.com}}
\and Paolo Frasca\thanks{DIIMA, Universit\`a di Salerno, Italy and IAC-CNR, Rome, Italy. E-mail: \texttt{paolo.frasca@gmail.com}}
\and Benedetto Piccoli\thanks{IAC-CNR, Rome, Italy. E-mail: \texttt{b.piccoli@iac.cnr.it}
 Mail: IAC-CNR c/o Dipartimento di Matematica dell'Universit\`a di Roma Tor Vergata, Via della Ricerca Scientifica, 00133, Rome, Italy}
 }

\begin{document}
\maketitle

\begin{abstract}
This paper proposes an agent-based model which reproduces different structures of animal groups. The shape and structure of the group is the effect of simple interaction rules among individuals: each animal deploys itself depending on the position of a limited number of close group mates. The proposed model is shown to produce clustered formations, as well as lines and V-like formations. The key factors which trigger the onset of different patterns are argued to be the relative strength of attraction and repulsion forces and, most important, the anisotropy in their application.
\end{abstract}
\vskip0.5cm
\noindent\textbf{Keywords.}
Anisotropic interactions, animal groups, coordinated behavior, self-organization, agent-based models.
\vskip0.5cm
\noindent\textbf{M.S.C.}
92D50, 92B05.
\vskip0.5cm

\section{Introduction}\label{sec:intro}
Group behavior in animals has greatly interested scientists and researchers in the past, and has received further attention in the last decades as a test case of self-organization. Recently it has attracted attention not only from biologists and ethologists, but also from physicists, mathematicians, and engineers. This interest has produced a huge amount of literature, which is well documented and reviewed. See, for instance, \cite{JK-GDR:02,DS:06,IG:08} and~\cite{FB-JC-SM:09}. Loosely speaking, the basic idea behind these works is that complex collective behavior arises from simple interactions among close animals. Following this idea, the aim of this paper is to investigate a simple model of interactions within groups which is able to reproduce rather different patterns and structures.

Let us briefly introduce the main features of our model and its relationships with literature. A detailed description is postponed to Section~\ref{sec:model}. First, our model is {\em agent-based}, in the sense that each animal is singularly considered. Furthermore it is {\em leaderless}, meaning that all animals act following the same set of rules and their behavior is not imposed by others. These assumptions are widely considered in literature, and accepted as biologically suitable for a variety of species. See, among others,~\cite{KW-JL:91,AH-CW:92,JKP-SVV-DG:02,IC-JK-RJ-GR-NF:02,IDC-JK-NRF-SAL:05}.

Second, our model is purely based on {\em attraction-repulsion} interactions between group mates. Attraction allows the group to be formed and stay tight, while repulsion allows to avoid collisions between group mates and keeps them well spaced. 
In the majority of papers, attraction and repulsion are combined with velocity alignment. Here we keep aside the issue of alignment which has received a considerable attention in itself~\cite{TV-AC-EBJ-IC-OS:95,FC-SS:07}, and focus on (the superposition of) attraction and repulsion. Our approach is close to~\cite{KW-JL:91,AM-LE-LB-AS:03} in this respect. 

Third, each animal interacts with a {\em limited number} of group mates.
The idea of having a limited number of interacting neighbors is not new. The work~\cite{WDH:71} already considers attraction towards {\em the} nearest neighbor, while later experimental investigations found interaction with the closest two-four individuals \cite{IA:80}. This fact has been included in several models, among others~\cite{KW-JL:91,AH-CW:92,YI-KK:02,JKP-SVV-DG:02,RL-YXL-LE:09}, but it has not always been included in recent models \cite{SG-SAL-DIR:96,IC-JK-RJ-GR-NF:02,HK-CKH:03,IDC-JK-NRF-SAL:05}, in favor of a purely metric notion of neighborhood: interactions occur among group mates which are less than a threshold apart.
Very recently, the former idea has again been brought to attention by~\cite{AC-IG:08a}, where the authors present experimental results regarding fairly large flocks of starlings. Results show that interaction occurs with up to six-seven neighbors, no matter how far they are. The same paper also gives some simulation results, which suggest that a better cohesion of the flock can be guaranteed in that way. We also want to point out a third approach to neighborhood definition, based on Voronoi partitions. Its application in biology dates back to~\cite{WDH:71}, and it is well documented in the physicists~\cite{GG-HC-YT:03} and engineers~\cite{FB-JC-SM:09} literatures.

Fourth, each animal only interacts with group mates which are in a suitably defined {\em sensitivity zone}. Restricting the interactions to a sensitivity zone raises the issue of defining its size and shape. On this matter, there is a significant amount of works considering limited visual (or sensing) fields. A limitation in the animal's angle of vision has been incorporated in most models, assuming a blind rear zone~\cite{AH-CW:92,YI-KK:02,IC-JK-RJ-GR-NF:02,CKH-HH:08,RL-YXL-LE:09}, or distinguishing between front and rear sensitivity~\cite{SG-SAL-DIR:96}. In~\cite{HK-CKH:03} authors assume repulsion and alignment regions to be elliptical (taking into account body shapes), and include blind areas.
In the recent paper~\cite{AN-VCB:08}, the authors clearly distinguish between visual field and sensitivity zones, stating that the behavioral rules they use in the model apply only in the front zone.
However, up to our knowledge, the anisotropy of sensitivity zones has always been taken just as a given constraint, and not as a potential resource able to shape the group geometry. Here is the main contribution of our paper: showing that a restricted sensitivity angle can be a key element in determining the structure and shape of an animal group. Indeed, we show that changing two parameters, namely two sensitivity angles for attraction and repulsion, we obtain {\em cluster} formations, {\em line} formations and {\em V-like} formations. Note that similar patterns have already been obtained by means of other mathematical models (see for example~\cite{GG-HC-YT:03,SG-SAL-DIR:96,AN-VCB:08}), the novelty here is that our model is able to reproduce all of them, depending on few parameters. We believe that this can offer new biological insights.

Besides investigating the main issue of the role of anisotropy, we also present some results on the effects of limiting the number of considered neighbors, and on the influence of the relative strength of attraction and repulsion on the inter-animal distance. The latter problem relates to the results in~\cite{AM-LE-LB-AS:03}.

The observations drawn from simulations are accompanied by a formal mathematical investigation of the model. One can find that a popular approach to the analysis of agent-based flocking models consists in defining a suitable potential function, called ``virtual'' or ``artificial'' potential, whose gradient gives the dynamics. This variational approach has been followed by both engineers~\cite{HGT-AJ-GJP:07} and biologists~\cite{AM-LE-LB-AS:03,YL-RL-LEK:08} leading to important results. However, it is not possible to apply it to our model. With respect to other models, ours has two main features: {\em state-dependent switching dynamics}, and {\em asymmetry of interactions}.
The former is due to the state-dependent definition of the set of group mates which interact with a given animal. This discontinuous state dependence has been included in previous literature: see for instance the treatment in~\cite{HGT-AJ-GJP:07}, based on non-smooth potentials.
The latter feature is due to the fact that the limited number of interacting neighbors and the restricted sensitivity angles imply that interactions need not to be symmetrical (reciprocal). This fact prevents the application of the virtual potential approach. The paper~\cite{HS-LW-TC:06} is a partial attempt in this direction, because it considers a second-order system in which velocity alignment is achieved by asymmetric interactions. Nevertheless agents' positions are controlled using symmetrical information exchange. As a consequence, its results are not useful to us, as we are interested in spatial configurations. Overall, we conclude that the mathematical analysis of our model requires a novel approach, which has to include discontinuous and asymmetric interactions among animals.

The rest of the paper is organized as follows. The details about the model are given in Section~\ref{sec:model}. Then, extensive simulation results are presented in Section~\ref{sec:results}, whereas Section~\ref{sec:analysis} contains the mathematical analysis. Later, Section~\ref{sec:discussion} discusses the implications of our findings and their biological soundness. We conclude presenting some lines of future research.

\section{Model definition}\label{sec:model}
The animals in the model are represented by point particles, which have simple continuous-time dynamics.
Given $N\in \natural$, for all $i\in \until{N}$ and $t\in \realnonnegative$, let $x_i(t)\in \real^2$ represent the position of the $i$-th animal, whose evolution is described by the differential equation
\begin{equation}\label{ode_fondamentale}
\dot x_i(t)=v_i(x(t)),
\end{equation}
where $x(t)$ is the vector $(x_1(t),\ldots,x_N(t))$. As in~\cite{AM-LE-LB-AS:03}, we use a coordinate system moving with the group centroid. This means that we are modelling relative movements of the individuals and group structure, rather than its global motion.
The velocity $v_i(x)$ is the sum of two contributions, expressing the effects of attraction and repulsion,
$$v_i(x)=\vc_i(x)-\vr_i(x).$$
In more detail, each of these contributions depends on the relative position of the other animals,
\begin{align*}
\vc_i(x)=&\sum_{j\in \creg{i}{n}}\fc(\|x_j-x_i\|)\frac{x_j-x_i}{\|x_j-x_i\|}\\
\vr_i(x)=&\sum_{j\in \rreg{i}{n}}\fr(\|x_j-x_i\|)\frac{x_j-x_i}{\|x_j-x_i\|}.
\end{align*}
The following definitions have been used.
\begin{itemize}
\item The function $\map{\fc}{\realnonnegative}{\realnonnegative}$  (resp., $\map{\fr}{\realnonnegative}{\realnonnegative}$) describes how each animal is attracted (resp., repelled) by a neighbor at a given distance, assuming $\|\cdot\|$ denotes the Euclidean norm in $\real^2$.
\item The {\em attraction neighborhood} $\creg{i}{n}$ (resp., the {\em repulsion neighborhood} $\rreg{i}{n}$) is the set of the $n$ animals closest to the $i$-th one, which are inside the attraction (resp., repulsion) sensitivity zone.
\end{itemize}

The above model is very general, and we need to specialize it by choosing the interaction functions and the shape of the sensitivity zones. We make the following assumptions.
\begin{enumerate}
\item[A1.] The functions $\fc$ and $\fr$ are assumed to be 
$$ 
\fc(\|x_j-x_i\|)= \Fc \|x_j-x_i\|\,, \qquad \fr(\|x_j-x_i\|)=\frac{\Fr}{\|x_j-x_i\|},
$$ 
where $\Fc$ and $\Fr$ are two positive constants.
\item[A2.] The sensitivity zones are depicted in Figure~\ref{fig:regions} and illustrated as follows. Let the center point be the animal's position, and let the horizontal axis (arrow-headed) represent the direction of motion. Attraction is active in a frontal cone whose width is given by the angle $\alphac\in(0,360\degs]$ (dashed line). Repulsion is active both inside a disk of radius $\Rsr>0$ (\emph{short-range} repulsion) and in a frontal cone of width $\alphar\in(0\degs,360\degs]$ (solid line). We stress that the sensitivity zones do not necessarily coincide with the visual field of the animal. They rather represent the zones which attraction and repulsion are focused on.
\item[A3.] The speed of each animal $\|v_i\|$ is bounded from above by a constant $\vmax.$
\end{enumerate}

\begin{figure}[!ht]\centering
\includegraphics[width=.6\columnwidth]{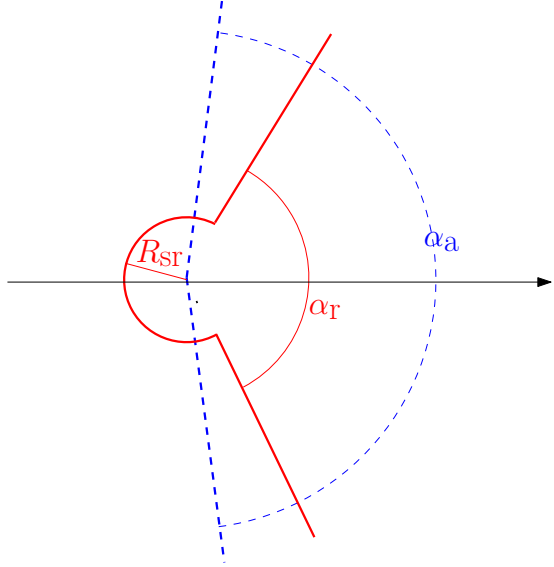}
  \caption{The shape of the sensitivity zones.} \label{fig:regions}
\end{figure}
The above assumptions result in the system
\begin{equation}\label{eq:system-simulated}
\dot x_i(t)=\Fc\sum_{j\in \creg{i}{n}} (x_j-x_i)  -  \Fr \sum_{j\in \rreg{i}{n}} \frac{(x_j-x_i)}{\|x_j-x_i\|^2} .
\end{equation}
Some remarks are in order.
\begin{enumerate}
\item[R1.] The definition of the interaction neighborhoods $\creg{i}{n}$ and $\rreg{i}{n}$ allows to have a priori bound on the number of effective neighbors, and therefore on the sensing and ``computational'' effort which is required for each animal. This fact, which copes with animals' intrinsic limitations, has been experimentally observed in biology, for fish~\cite{IA:80} and birds~\cite{AC-IG:08a}. The latter paper calls this neighborhood definition {\em topological}, as opposed to {\em metric} definitions, based on distance only.
\item[R2.] Our assumption of unbounded sensitivity regions does not intend to imply that animals sensing capabilities extend on an unlimited range, but rather that group dynamics happen in a relatively small area.
\item[R3.] If $\alphac=\alphar=360\degs$, the parameter $\Rsr$ has no effect, and the interaction is completely isotropic as in the simulations presented in~\cite{AC-IG:08a,DC-SAL:99}. Note that, even if there is no preference for any specific direction, the limitation of the number of considered neighbors makes the interactions not reciprocal, i.e. the fact that the $i$-th animal interacts with the $j$-th does not imply that the $j$-th interacts with the $i$-th.
\item[R4.] By specializing the functions $\fc$ and $\fr$, one can obtain various interaction models. 
Indeed many proposals can be found in the literature, as reviewed in~\cite{KW-JL:91} and~\cite{AM-LE-LB-AS:03}. However, the shape of these functions is not the main point in our paper, and thus we have decided to focus on a simple choice, in order to highlight the innovative part of our approach, i.e. the angle-dependent interactions.
Similar considerations are valid also for some features introduced in other models, such as a \textit{neutral zone} around animals~\cite{JHT-SAL-DIR:04,DC-SAL:99} or a hierarchical decision tree which allows the repulsion force to have the priority over the attraction force~\cite{SG-SAL-DIR:96,DC-SAL:99}.
\end{enumerate}

\section{Simulations results}\label{sec:results}
In this section we make use of the agent-based model~\eqref{eq:system-simulated} in order to show the effects of the anisotropic interactions on the shape of the group. Namely, depending on the angles $\alphac$, $\alphar$, and the ratio between repulsive and cohesive forces, we shall obtain either {\em clusters}, or {\em lines} or {\em V-like} formations. These patterns are described in the sequel.

To perform the simulations, it is instrumental the introduction of the constant $\xi=\sqrt{\frac{\Fr}{\Fc}}$, which allows to rewrite~\eqref{eq:system-simulated} as
\begin{equation}\label{eq:system-adim}
\dot x_i(t)=\sum_{j\in \creg{i}{n}} (x_j-x_i)  -  \xi^2 \sum_{j\in \rreg{i}{n}} \frac{(x_j-x_i)}{\|x_j-x_i\|^2}.
\end{equation}
In equation~\eqref{eq:system-adim}, the unit of length is chosen to be the body length (\bl) of the animal, and the time unit (\tu) is the inverse of $\Fc$.
Simulations are obtained solving the system of equations~\eqref{eq:system-adim} via an explicit forward adaptive Euler scheme. Each run starts from a randomly generated initial configuration (contained in square of edge $\Lci$), and ends when the system reaches a steady state. To take into account the uncertainties in sensing and motion of the animals, we include small additive random disturbances on the direction of the velocity, uniformly distributed in $[-\alpha_{\textrm{noise}},\alpha_{\textrm{noise}}]$. All the steady-state configurations described in the sequel are robust to such noise. A summary of the parameters and their values is given in Table~\ref{table:parameters}.
In the sequel we discuss the role of $N$, $n$, $\xi$, $\alphac$ and $\alphar$, which are most significant to us, whereas $\Rsr$, $\vmax$, $\alpha_{\textrm{noise}}$ and $L$ are kept fixed.

\begin{table}\caption{Model parameters.}\label{table:parameters}
\begin{tabular}{|c|c|c|c|}
  \hline
  Name & Symbol & Unit & Values explored \\  \hline  \hline
  Forces ratio      & $\xi$ & \bl                               & 0.1\textendash{20} \\  \hline
  Attraction angle    & $\alphac$ & degrees $(\degs)$                & 0\textendash{}360 \\  \hline
  Repulsion angle   & $\alphar$ & degrees $(\degs)$            & 0\textendash{}360 \\  \hline
  Number of animals & $N$ & adimensional                    &  2\textendash{}200 \\  \hline
  Number of considered neighbors & $n$ & adimensional                  & 1\textendash{}$(N-1)$ \\  \hline
  Short-range repulsion radius & $\Rsr$ & \bl             & 1 \\  \hline
  Maximum speed & $\vmax$    & \bl/\tu                                 &  2\textendash{}30 \\ \hline
  Size of initial domain & $\Lci$ & \bl                     & 15 \\  \hline
  Noise magnitude& $\alpha_{\textrm{noise}}$ & degrees $(\degs)$                        & 0\textendash{}10 \\ \hline
\end{tabular}
\end{table}

\subsection{Clusters}
In this paragraph, we describe simulation results when interactions are assumed to be isotropic. These results are not dissimilar from others in the literature (e.g.,~\cite{AM-LE-LB-AS:03,YL-RL-LEK:08}): we include them for two reasons. First, for comparison with the less usual patterns described in the following paragraphs. Second, because they allow some interesting remarks about the role of the model's parameters $\xi$ and~$n$.

Thus, let us assume that $\alphac=\alphar=360\degs$. As a consequence, the outcome of the simulations is a cohesive and well spaced cluster. For a better understanding, we make use of an indicator which is largely used in the literature (see for instance~\cite{AH-CW:92,HK-CKH:03}): the {\em mean distance to the nearest neighbor} NND, defined as
$$
\textrm{NND}=\frac{1}{N}\sum_{i=1}^N \min_{j\neq i} \|x_i-x_j\|.
$$
We investigate the dependence of NND on the parameter $\xi$: simulation results are shown in Figure~\ref{fig:2crystal}.
\begin{figure}[!ht]\centering
\includegraphics[width=.49\columnwidth]{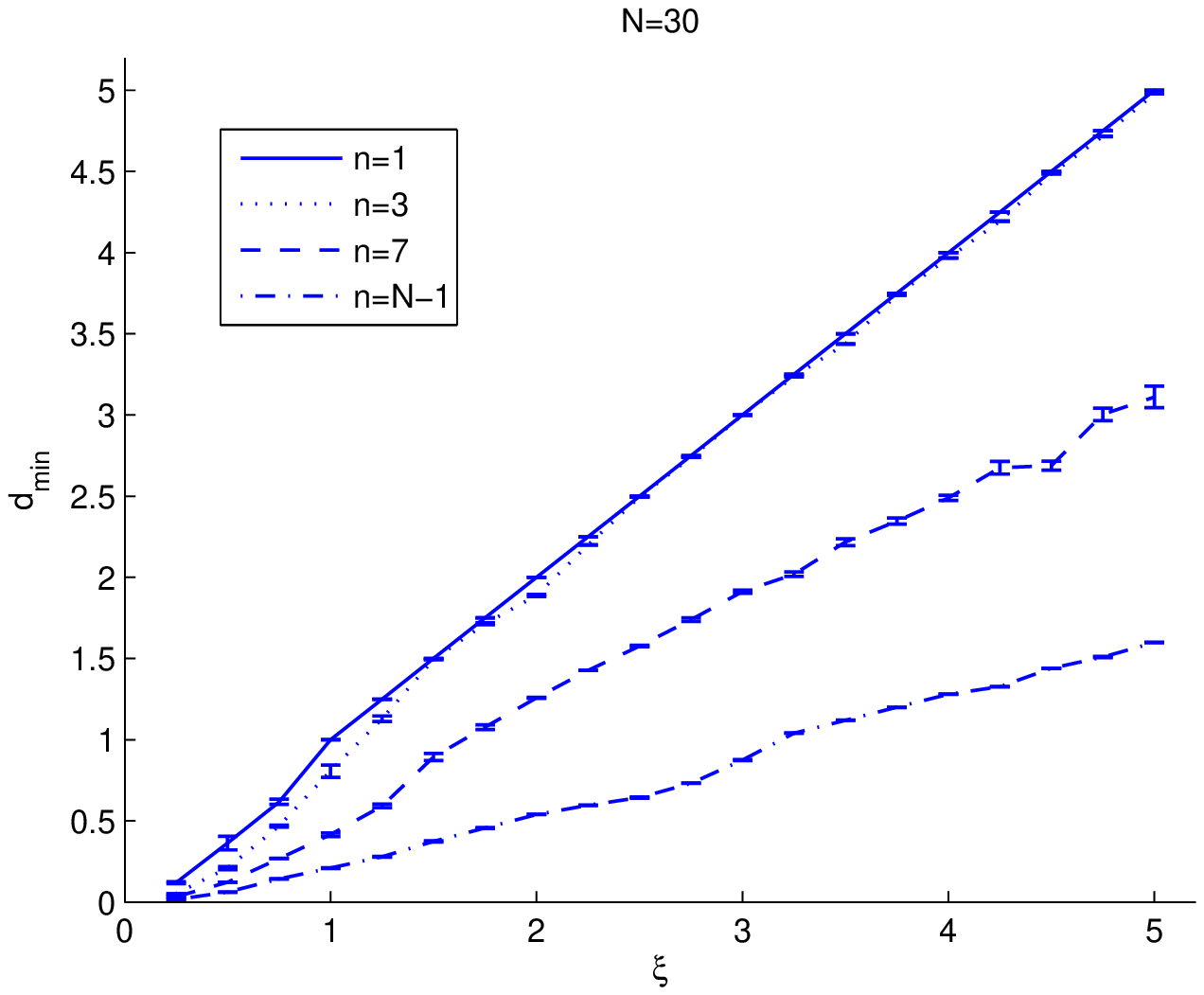}
\includegraphics[width=.49\columnwidth]{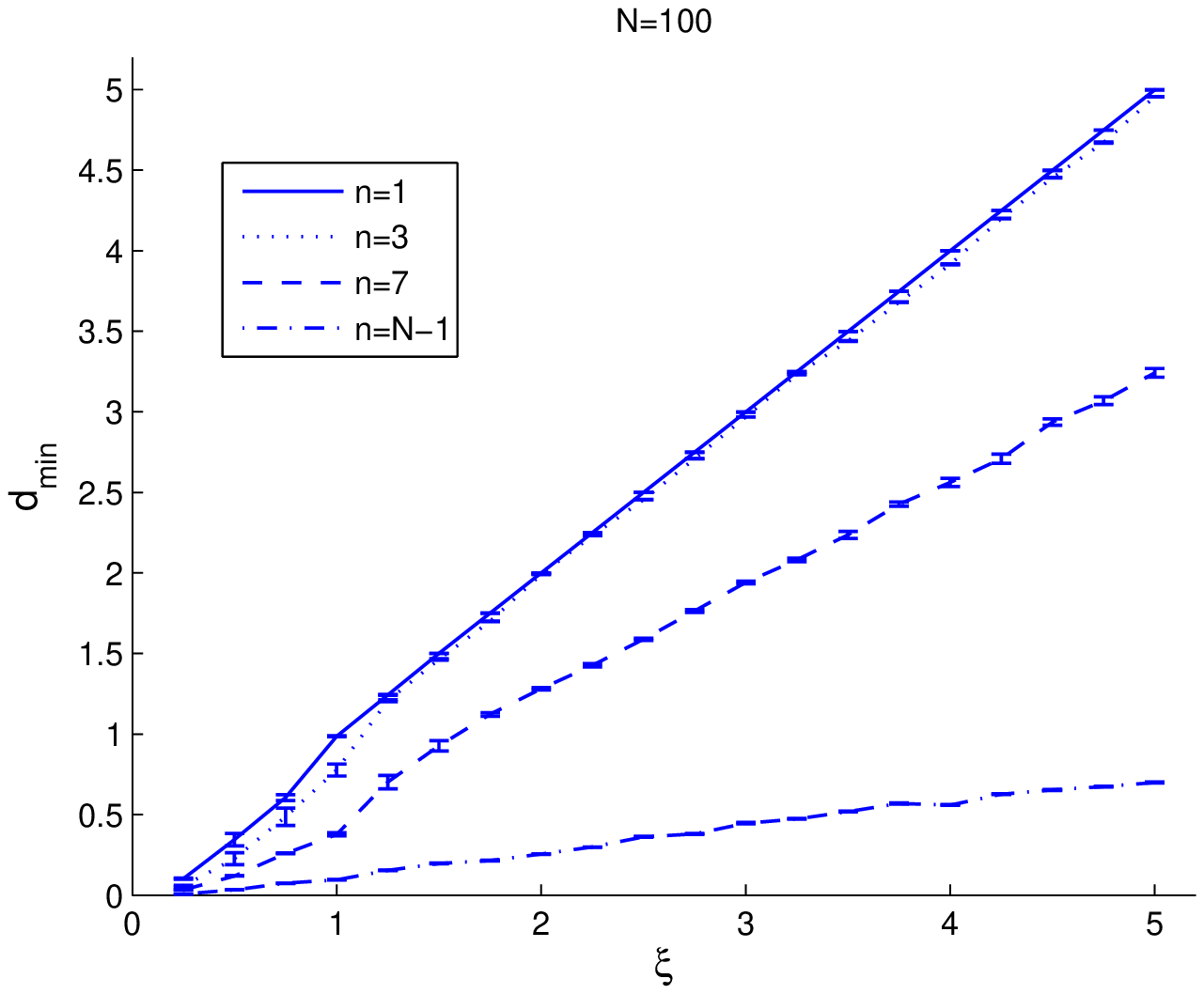}
  \caption{NND as a function of the ratio $\xi$, for different values of $n$. Error bars denote variance across individuals. Plots assume $N=30$ (left) and $N=100$ (right).} \label{fig:2crystal}
\end{figure}
We observe that NND is an increasing function of $\xi$, in particular it increases roughly linearly in $\xi$. This linear dependence is observed for any choice of $n$. Two other features are noticeable: first, if $n=1$, animals asymptotically converge to a {\em comfortable distance} which is equal to $\xi$. Second, for any fixed $\xi$, NND decreases as $n$ increases. Moreover, all these remarks do not depend on $N$.

These results can be compared with those in~\cite{AM-LE-LB-AS:03}: in that paper, the authors assume that all animals in the group interact among each other, and they conclude that, the larger the group, the closer packed it is. Our simulations, instead, suggest that the significant parameter is $n$, the number of neighbors which is taken into account, rather than $N$, the global number of animals.  However, from the biological point of view, the number of neighbors $n$ is not truly a free parameter: $n$ can not be too large because of the limited sensing and analysis capabilities of the animals, and can not be too small either. For instance, a low value of $n$ sounds unsafe from the point of view of collision avoidance. 

Figure~\ref{fig:2crystal} also shows that the variance of the distance to the nearest neighbor among the animals is quite small. This means that groups are internally uniform, in terms of spacing. This uniformity is also apparent if one looks at the steady-state configurations: three examples are shown in Figure~\ref{fig:cluster-views}.
\begin{figure}[!ht]\centering
\subfloat[Crystal-like cluster: $n=1$]{\label{fig:cluster-1} \includegraphics[width=.33\columnwidth]{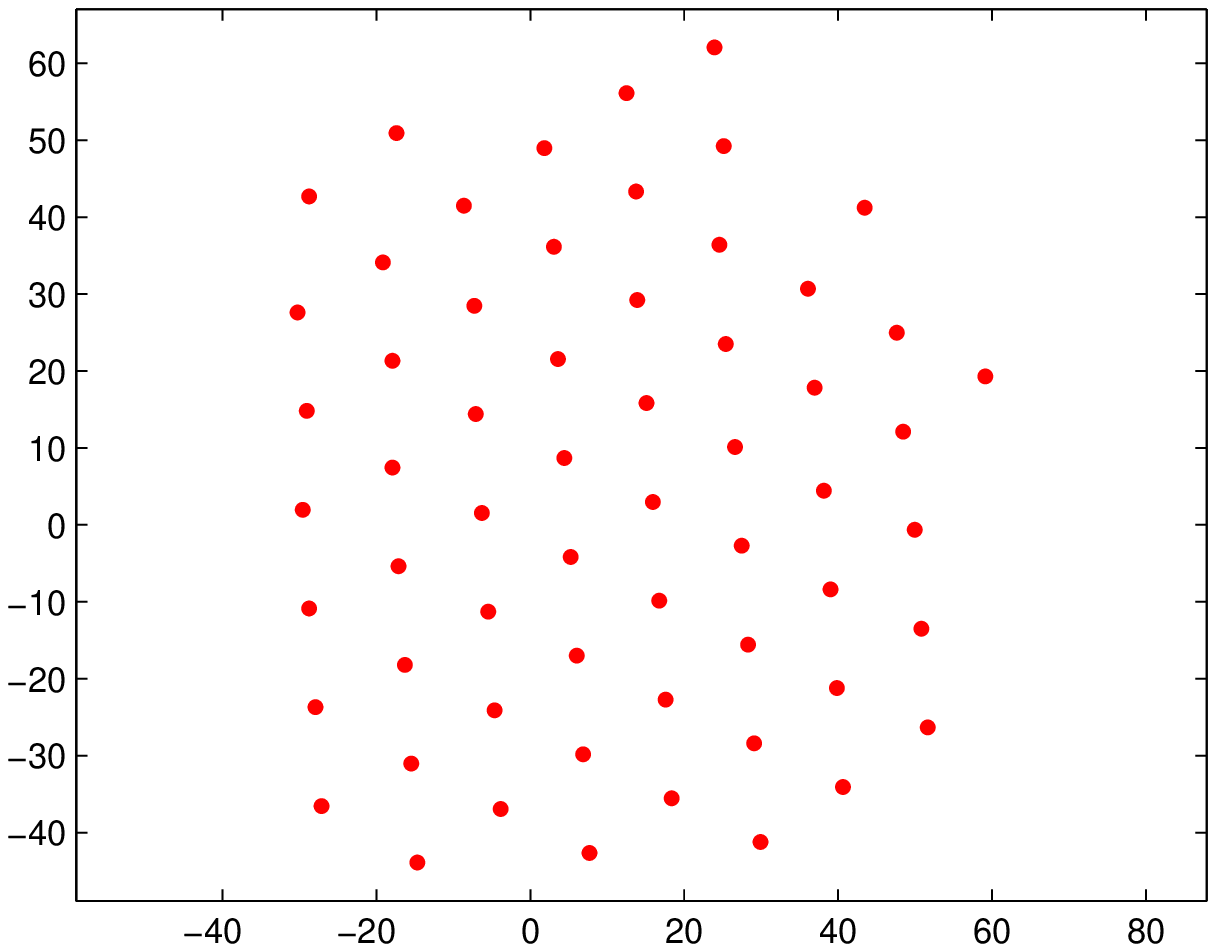}}
\subfloat[Disordered cluster: $n=7$]{\label{fig:cluster-7} \includegraphics[width=.33\columnwidth]{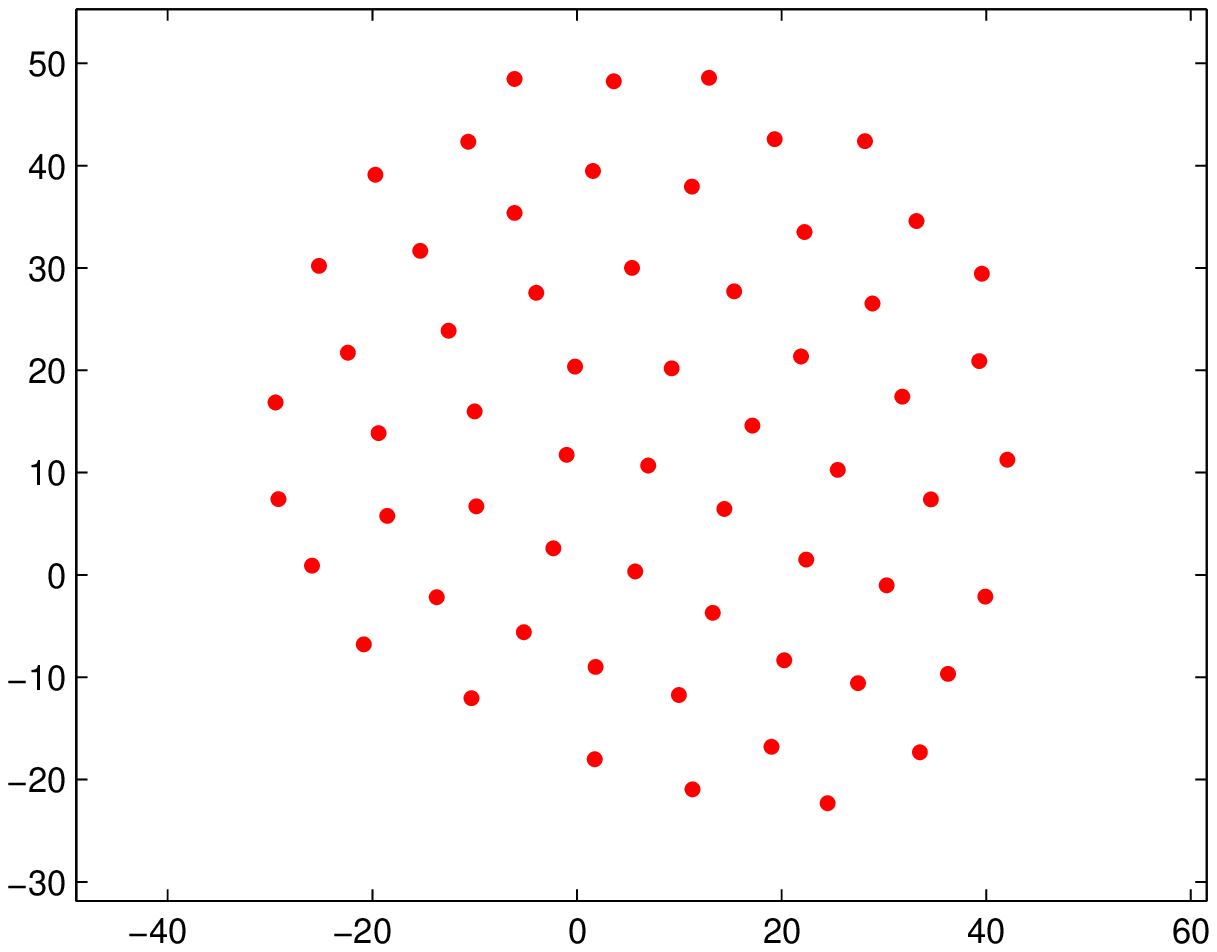}}
\subfloat[Round cluster: $n=N-1$]{\label{fig:cluster-N} \includegraphics[width=.33\columnwidth]{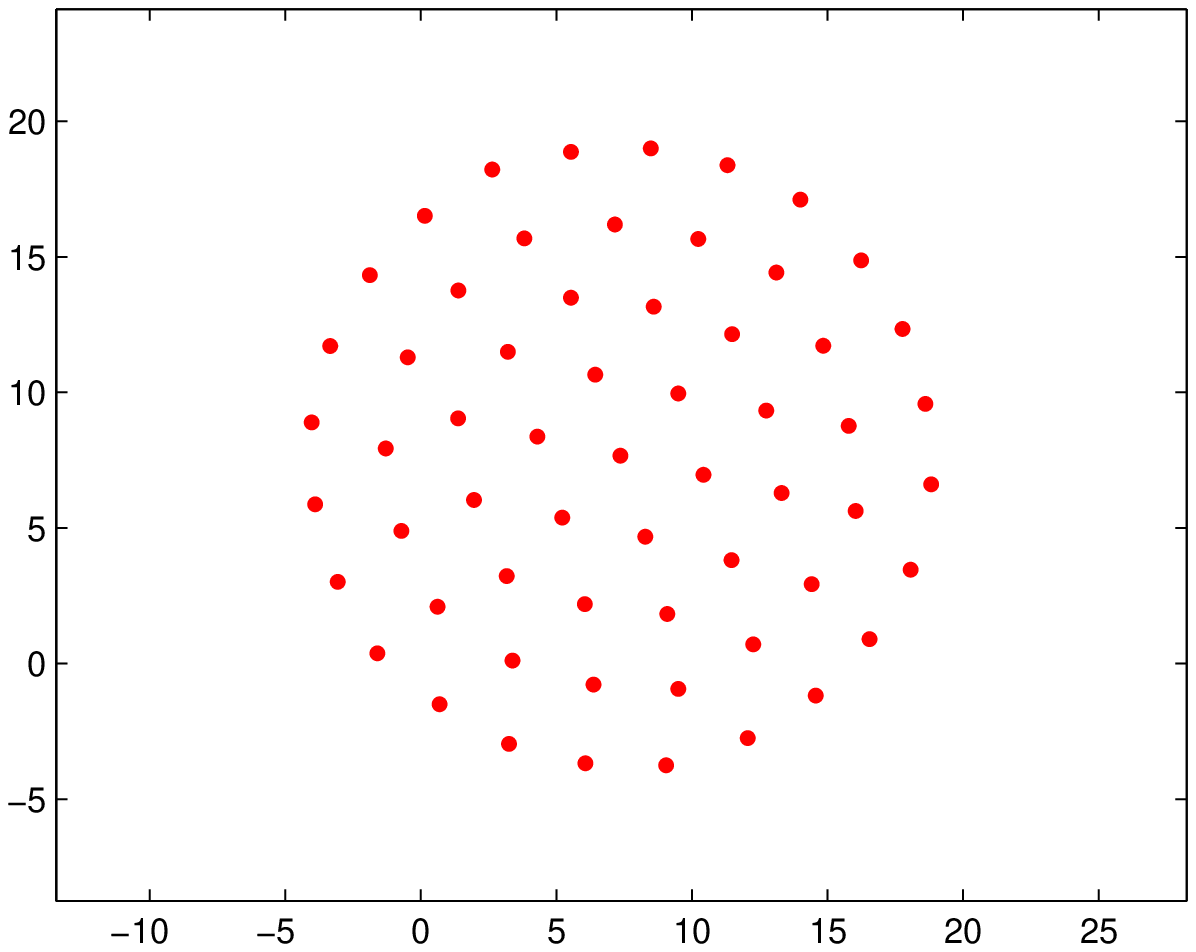}}
  \caption{Examples of internal structures of clusters obtained with $\xi=10$ and $N=60$. The groups are moving horizontally from left to right.} \label{fig:cluster-views}
\end{figure}
Nevertheless, as $n$ varies, significant modifications are apparent in terms of relative positions. If $n=1$, the animals are deployed to form an hexagonal lattice, reminiscent of a crystal. For intermediate $n$'s the internal structure is rather disordered, and if $n=N-1$ it is made of concentric circles. Note that crystal-like patterns have also been found in~\cite{GG-HC-YT:03} and in~\cite{YL-RL-LEK:08}, where they are compared with the less regular structures obtained in~\cite{YC:07}. Such patterns might be of interest for engineering application to environmental deployment of robots~\cite{FB-JC-SM:09} or sensors~\cite{PF-PM-BP:09a}.

\subsection{Lines}\label{subsec:lines}
In this paragraph we show how restricting the sensitivity field (i.e. reducing $\alphar$ and $\alphac$) induces the formation of an elongated group. The key element for the formation of these patterns is a restricted frontal sensitivity field. Let us denote by $e$ the oriented {\em elongation} of the group (see for instance~\cite{SG-SAL-DIR:96,HK-CKH:03}), defined as the ratio of the vertical to the horizontal side of the smallest rectangle containing the group, oriented parallel to the direction of the movement.
Here we study how $e$ depends on the angles ($\alphar$, $\alphac$), and on $n$. Results are summarized in Figure~\ref{fig:elongation}, which shows the average and the extreme values of $e$ over 100 runs as a function of the angles ($\alphar$, $\alphac$), for $n=1,7,N-1$. It is clear that reducing angles from $(360\degs,360\degs)$ to $(40\degs,180\degs)$ affects the elongation of the group.
\begin{figure}[htb]\centering
\includegraphics[width=.49\columnwidth]{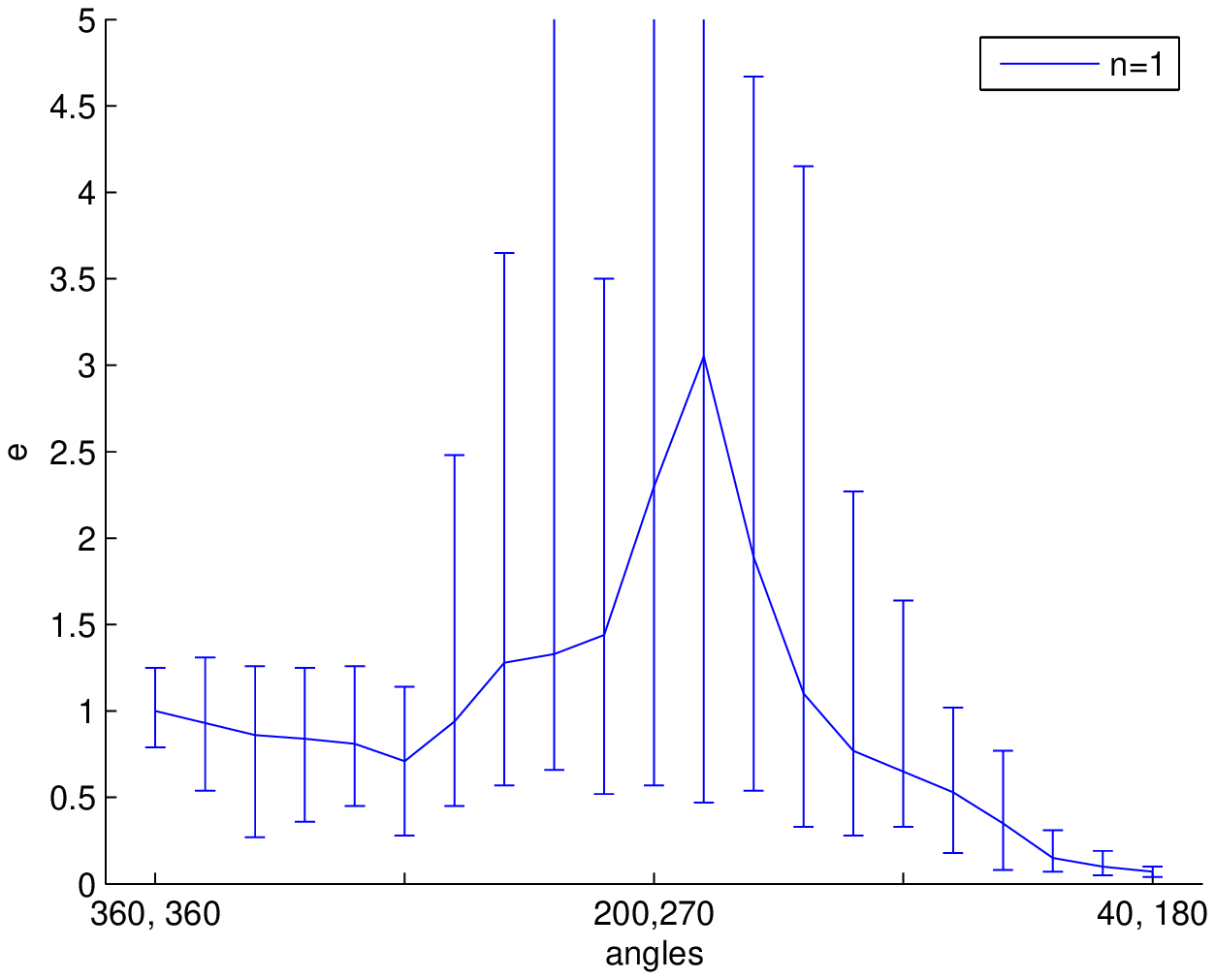}
\includegraphics[width=.49\columnwidth]{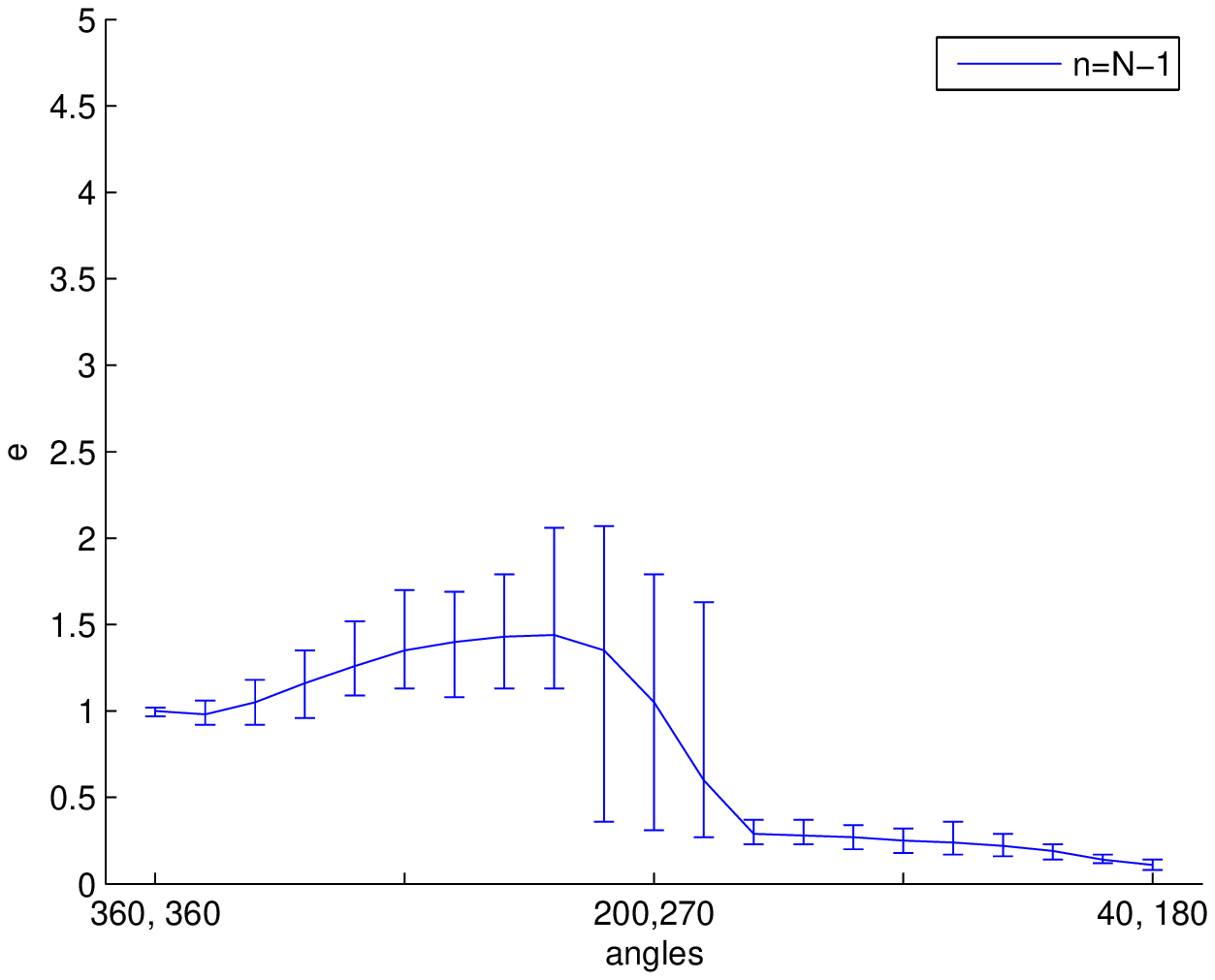}\\
\includegraphics[width=.7\columnwidth]{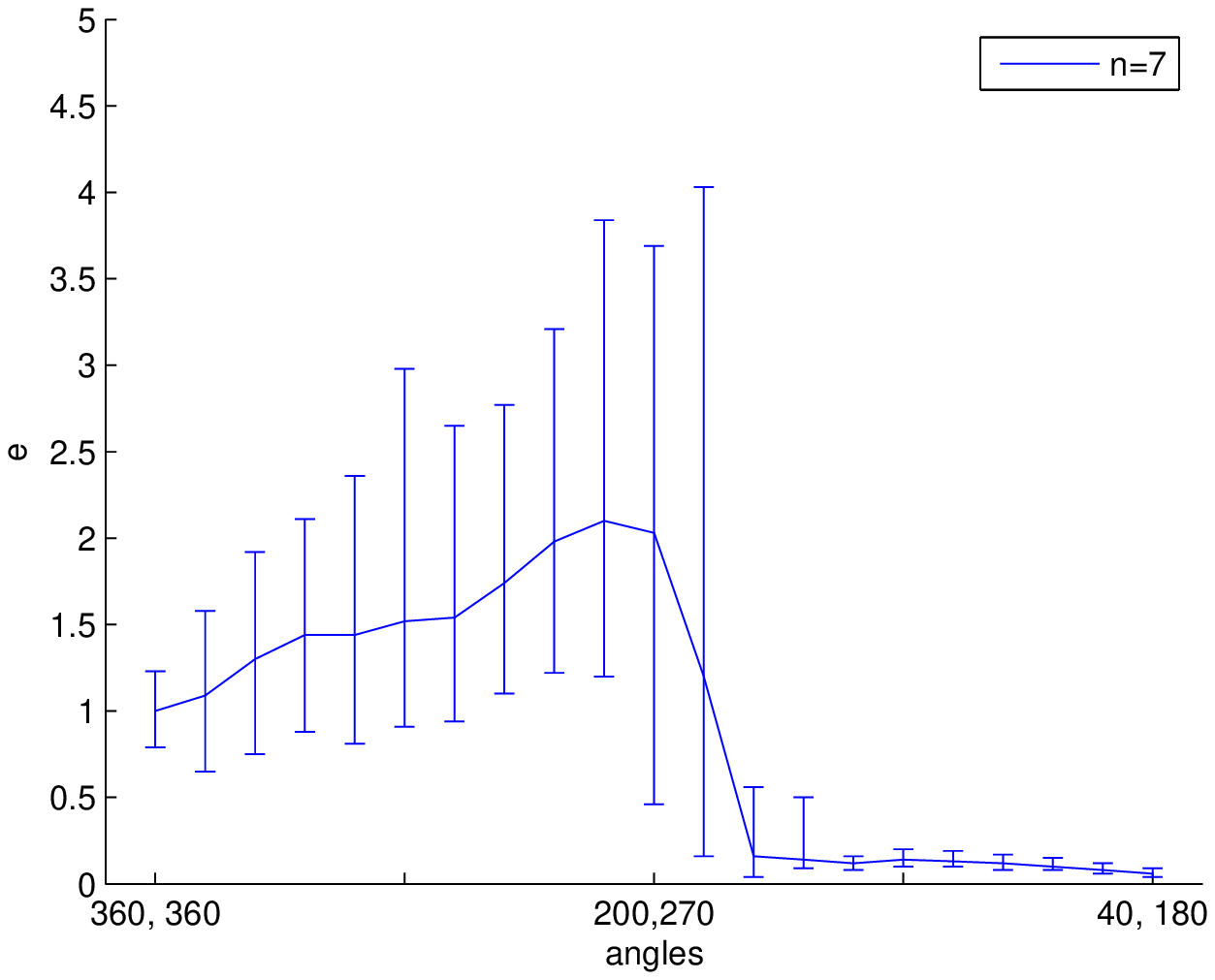}
\caption{Mean elongation of the group as a function of the sensitivity angles, for $N=30$ and different values of $n$. The angles are $(\alphar$, $\alphac)=(360\degs-k 16\degs,360\degs-k 9 \degs)$, $k=0,\dots,20$. Data come from 100 runs. Error bars are the ranges of the outcomes.}
\label{fig:elongation}
\end{figure}
\begin{figure}[htb]\centering
\includegraphics[width=.7\columnwidth]{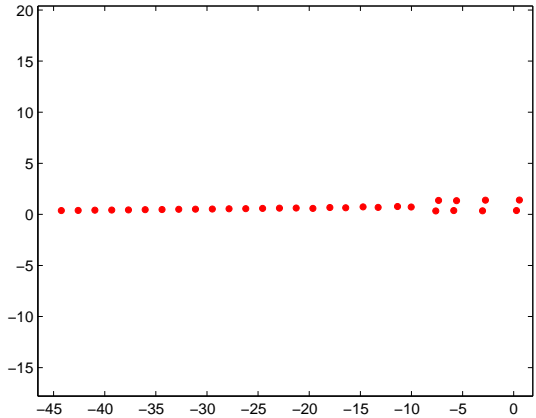}
  \caption{A line formation, obtained with $\alphar=40\degs$, $\alphac=180\degs$, $n=7$, $\xi=10$, $N=30$. The group is moving horizontally from left to right. See text for the explanation of the irregularities in the head of the line.}
  \label{fig:line}
\end{figure}
Wide angles (roughly, $360\degs>\alphar>200\degs$ and $360\degs>\alphac>270\degs$) induce an average elongation greater than 1, i.e. the group stretches along the transverse direction. Moreover, the range of the outcomes is large, meaning that different initial conditions affect significantly the evolution of the system. In the majority of runs the system does not reach an equilibrium in a reasonable time, and the simulation is stopped after a maximum number of iterations.
Conversely, small angles ($200\degs>\alphar>40\degs$ and $270\degs>\alphac>180\degs$) lead to small values of $e$, with small differences among the runs. The steady-state configurations are strongly elongated in the direction of motion: in the limit case, we obtain a line as in Figure~\ref{fig:line}. One can also observe a dependence on $n$: larger values of $n$ show a sharper transition to lines than with $n=1$.
The obtained lines are not in general stable: this fact relates to the phenomenon of {\em string instability} described in the engineering literature~\cite{PS-AP-JKH:04}: in a line of vehicles which are tracking their forerunners positions, perturbations propagate down the line in cascade, leading to instabilities. Control-theoretic results presented in~\cite{DS-JKH:96} state that weakening the interaction forces should reduce these negative effects. Consistently, we have observed that damping down to zero small repulsion forces improves the stability of the lines.

Finally, we note that in Figure~\ref{fig:line} some ``border effects'' are visible in the head of the line (right side): since in this case we chose $n=7$, the animals in the front can not interact with a sufficient number of group mates and then they do not form a single-file line.

\subsection{Vees}
In this paragraph we show that a restricted frontal {\em repulsion} range ($\alphar<180\degs$) induces the formation of V-like patterns. V-like formations have been recently obtained in the literature~\cite{AN-VCB:08}, using an {\it ad hoc} model motivated by aerodynamics considerations. In our model, instead, V-like formations arise as one of the anisotropy's effects.
Following~\cite{FHH:74}, we adopt a broad definition of V-like formations, which includes asymmetric formations (J-like, and echelons) as well. To understand the role of anisotropy in the emergence of such patterns, we study how the configurations depend on the repulsion angle $\alphar$, while we keep fixed $\alphac=360\degs$.\footnote{To obtain sharper evidence from simulations, it is useful to strengthen repulsion with respect to attraction, taking a value of $\xi$ larger than for lines: in the following simulations we set $\xi=13$, and we also set $n=7$ and $\vmax=10$.
}

In Figure~\ref{fig:roses} we plot the distribution of the angles between the nearest neighbor and the direction of motion, for four values of $\alphar$.
A few remarks are in order. If $\alphar=360\degs$, animals do not show any angle preference. If $\alphar=270\degs$, animals show a preference of the front/back positions versus side positions. Finally, if $\alphar<180\degs$, animals clearly prefer to keep a specific angle, namely $\frac{\alphar}{2}$, with respect to their nearest neighbor.

\begin{figure}[h!]\centering
\includegraphics[width=.49\columnwidth]{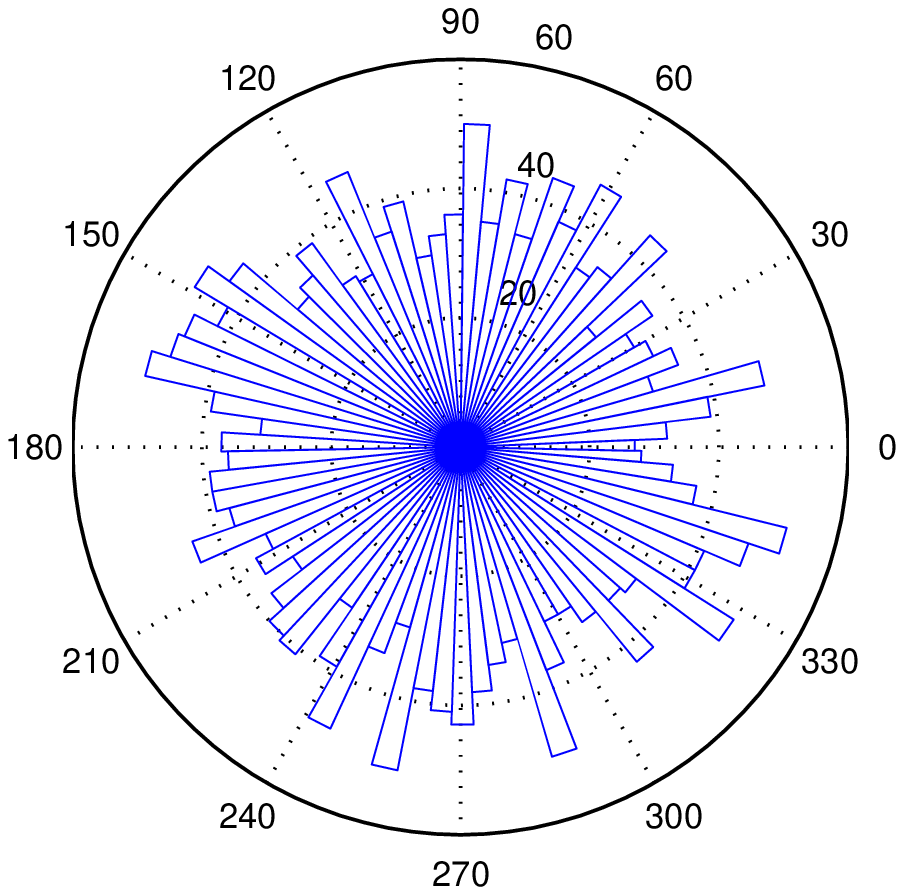}
\includegraphics[width=.49\columnwidth]{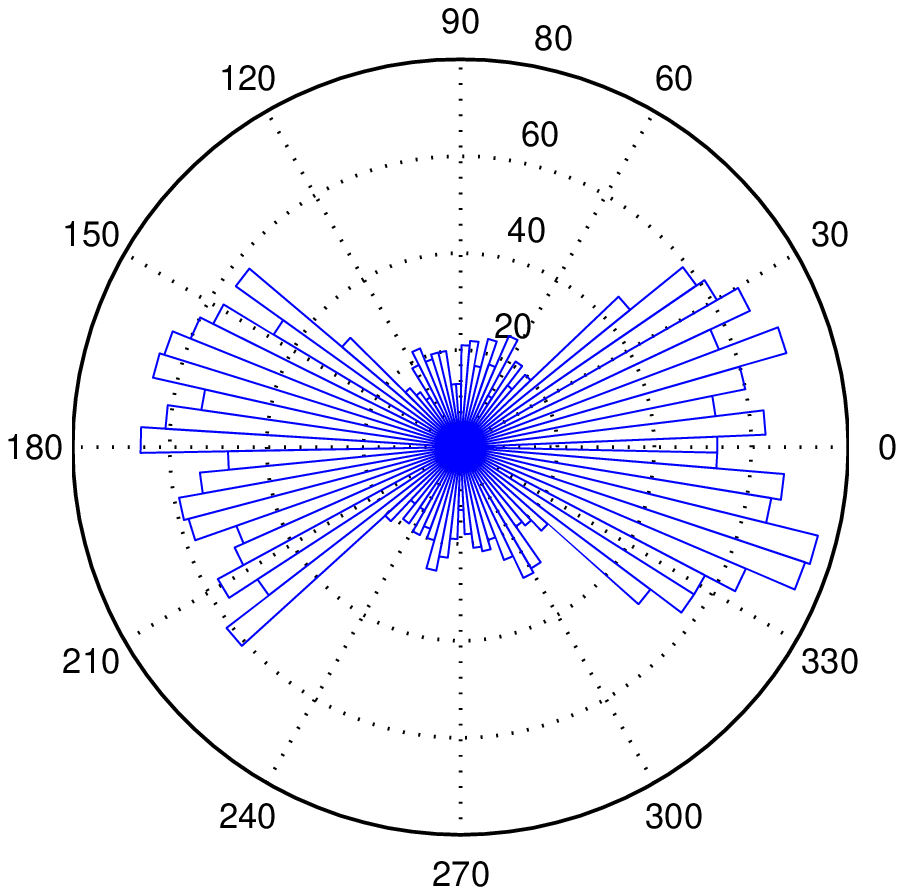}
\includegraphics[width=.49\columnwidth]{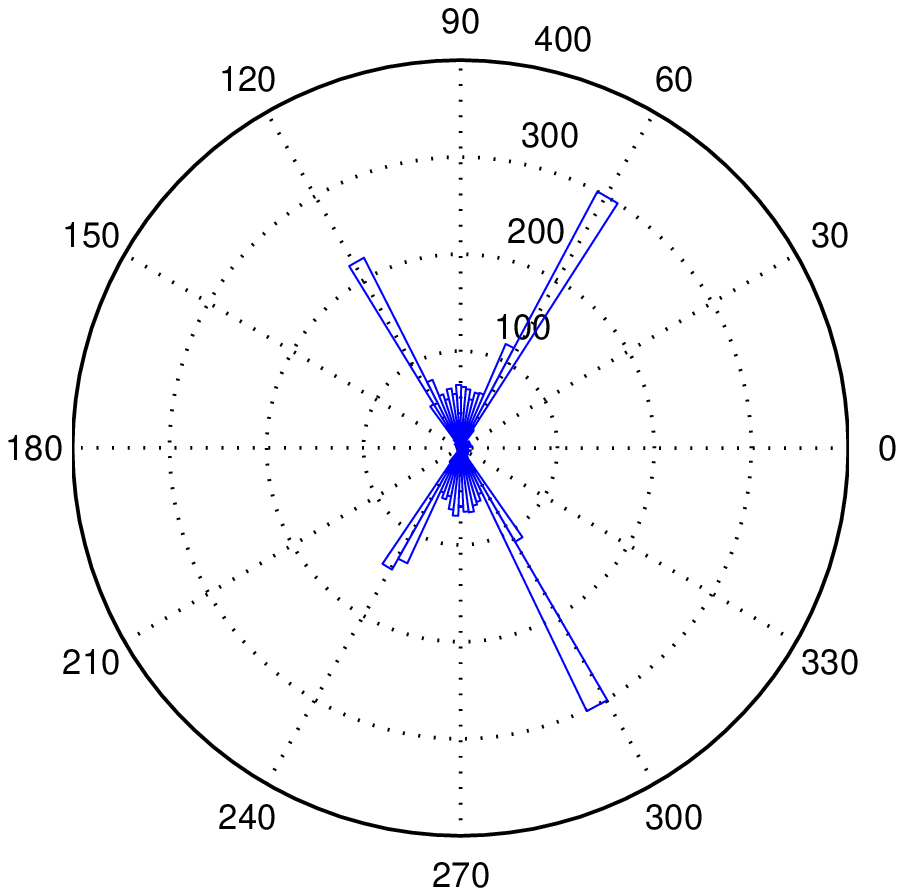}
\includegraphics[width=.49\columnwidth]{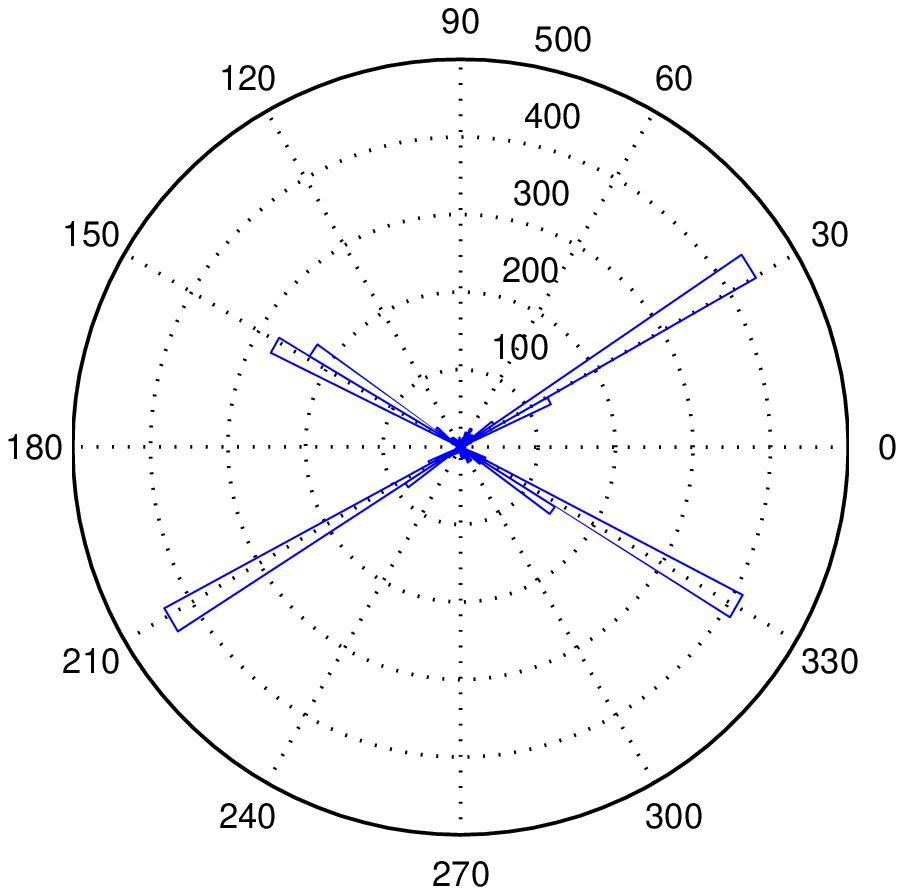}
  \caption{Distribution of the nearest\textendash{}neighbor angle for $\alphar=360\degs$, $270\degs$, $120\degs$, $60\degs$. Data from 100 runs.} \label{fig:roses}
\end{figure}

Second, to obtain more quantitative results about the transition from clusters to Vees, we introduce an Alignment Index ($\ai$), defined as follows. $\ai(\theta)$ is the percentage of individuals whose nearest neighbor is positioned (up to a small tolerance $\angleTol$) at a given angle $\theta$ with respect to them. The dependence on $\alphar$ of this novel index is shown in Figure~\ref{fig:angles}. The figure plots both $\ai(\alphar/2)$ and $\ai(30\degs)$, computed as the average over 100 runs, with $\angleTol=3\degs$. If $\alphar>180\degs$ the alignment index is as low as in a random configuration; if instead $\alphar<180\degs$, the high index confirms the preference for an $\frac{\alphar}{2}$-alignment.
\begin{figure}[h!]\centering
\includegraphics[width=.7\columnwidth]{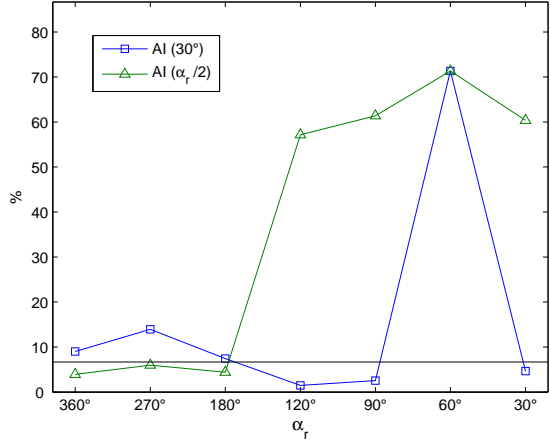}
  \caption{ $\ai(30\degs)$ and $\ai(\alphar/2)$ as functions of $\alphar$. Average of 100 runs. The reported reference value $6\%$ is the expected value of $\ai$ from a random uniform distribution.}
  \label{fig:angles}
\end{figure}
Qualitative analysis of the obtained configurations confirm these results: one observes that for a wide range of $\alphar$, a scenario sets up, in which the animals form (several) V-like formations. Examples are given in Figure~\ref{fig:Vee}.
Finally, it can be noted from Figure~\ref{fig:shots} that the number of considered neighbors $n$ affects the ability to form V-like configurations: if $n$ is too small ($n=1$) or too large ($n=N-1$), the interesting patterns do not show up.
\begin{figure}[h!]\centering
\includegraphics[width=.45\columnwidth,bb=80 250 550 590]{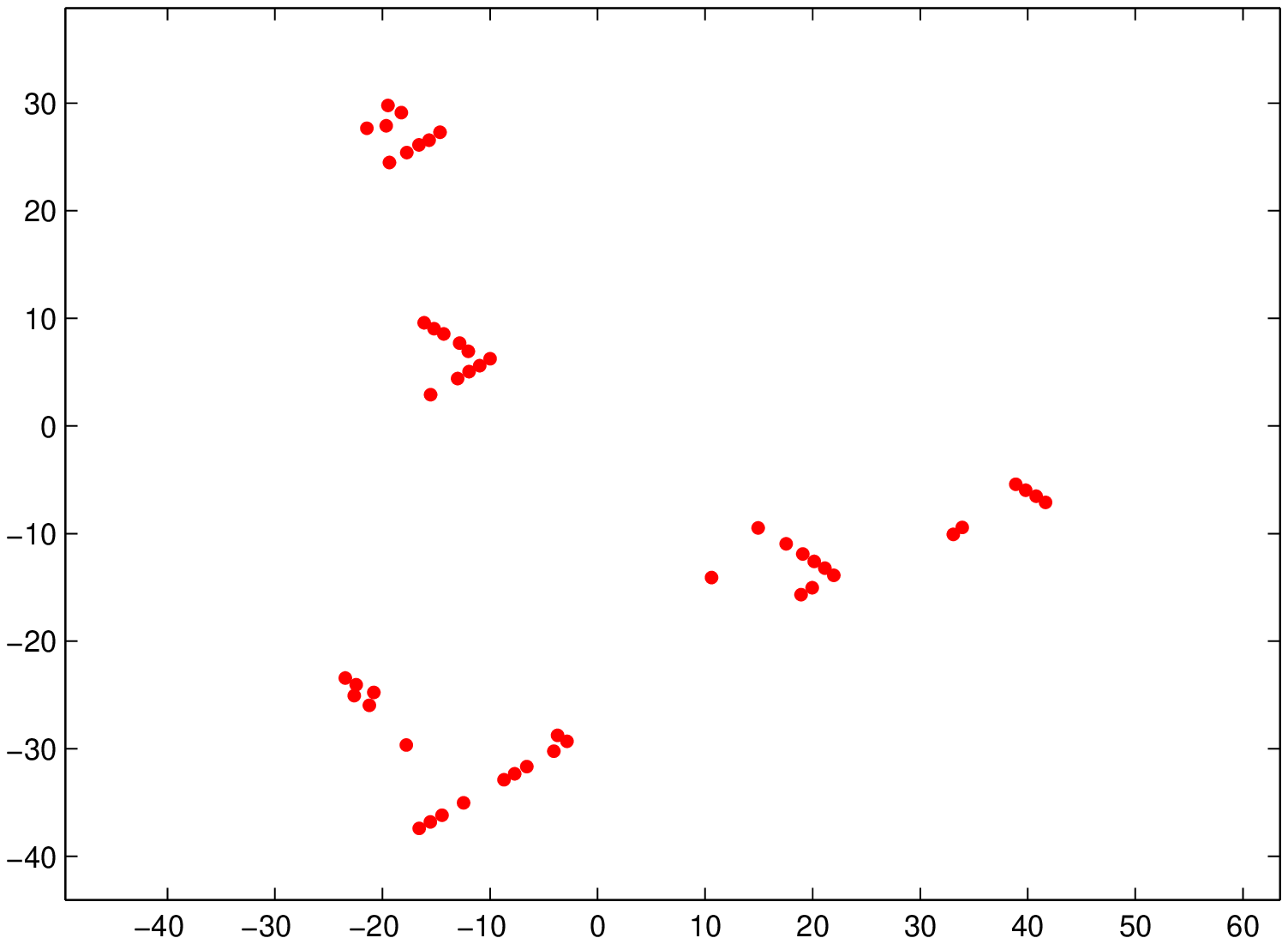}
\includegraphics[width=.40\columnwidth,bb=150 290 430 520]{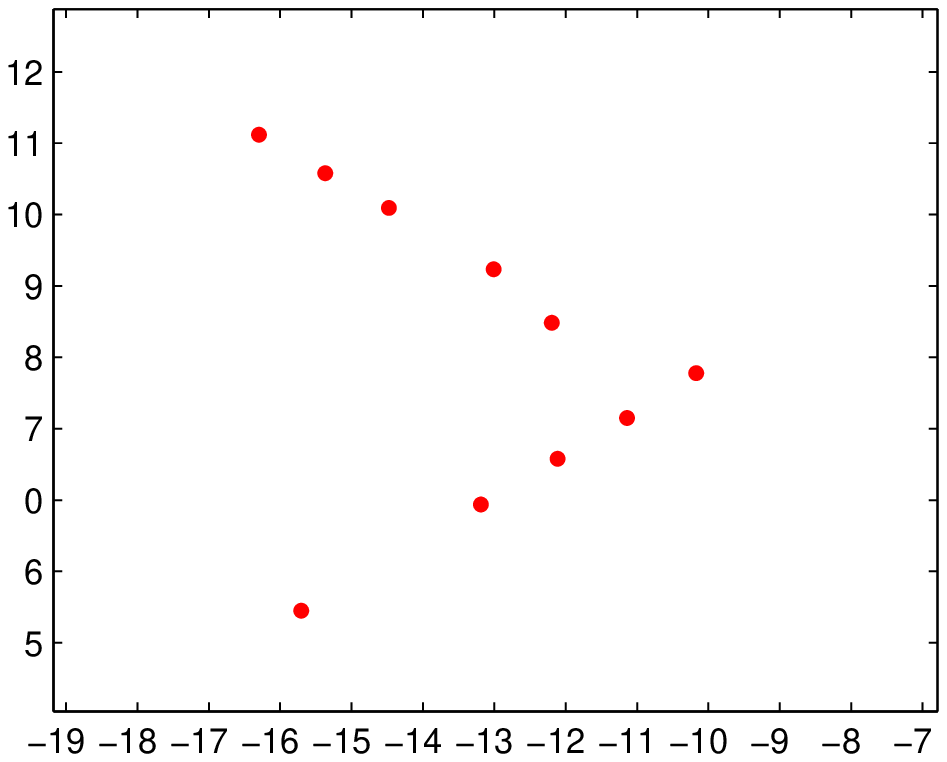}
  \caption{V-like formations obtained with $\alphar=60\degs,$ $N=30$. The plot on the right is a close-up on one of the Vees. The group is moving horizontally from left to right.}
  \label{fig:Vee}
\end{figure}

\begin{figure}[h!]\centering
\includegraphics[width=.49\columnwidth]{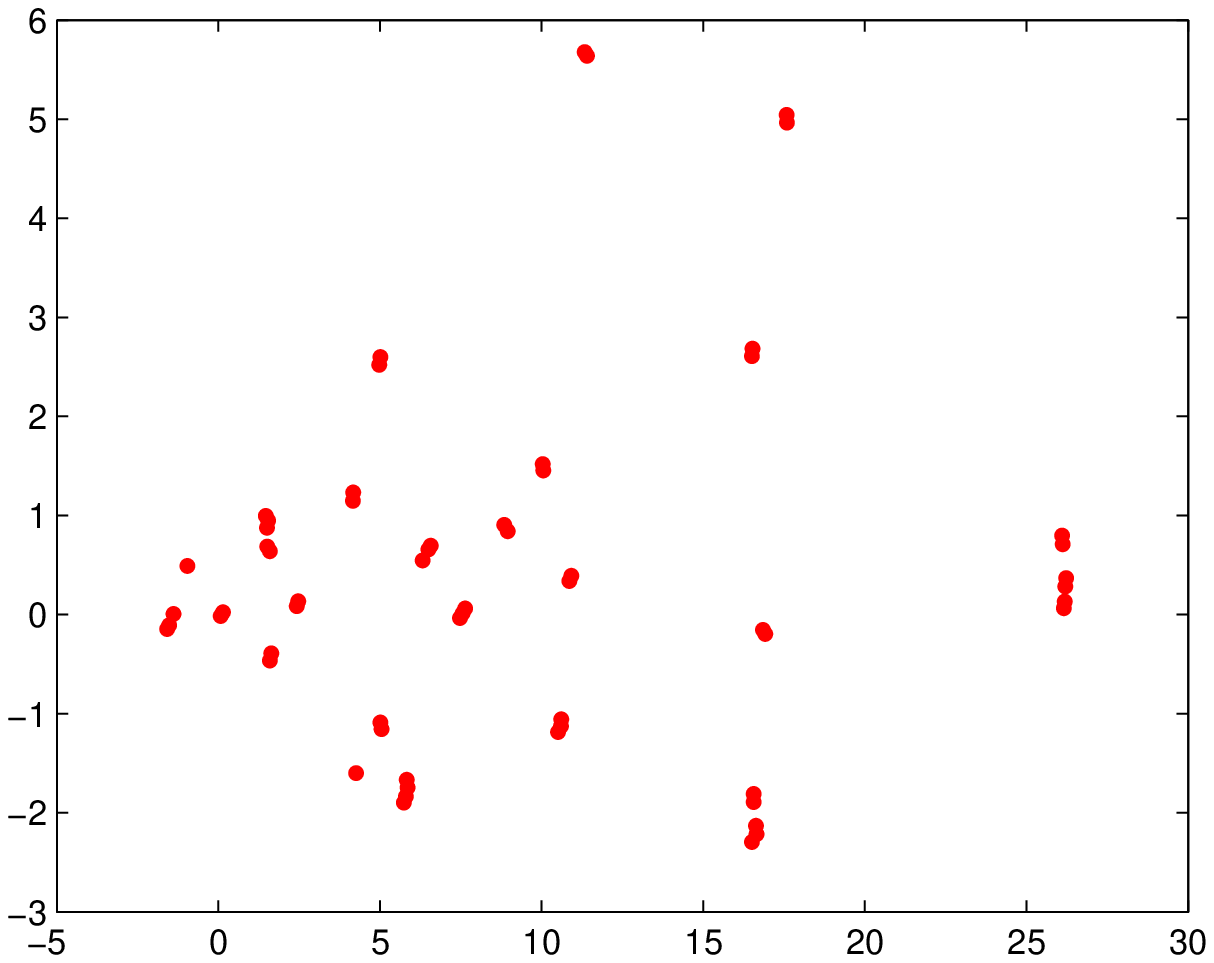}
\includegraphics[width=.49\columnwidth]{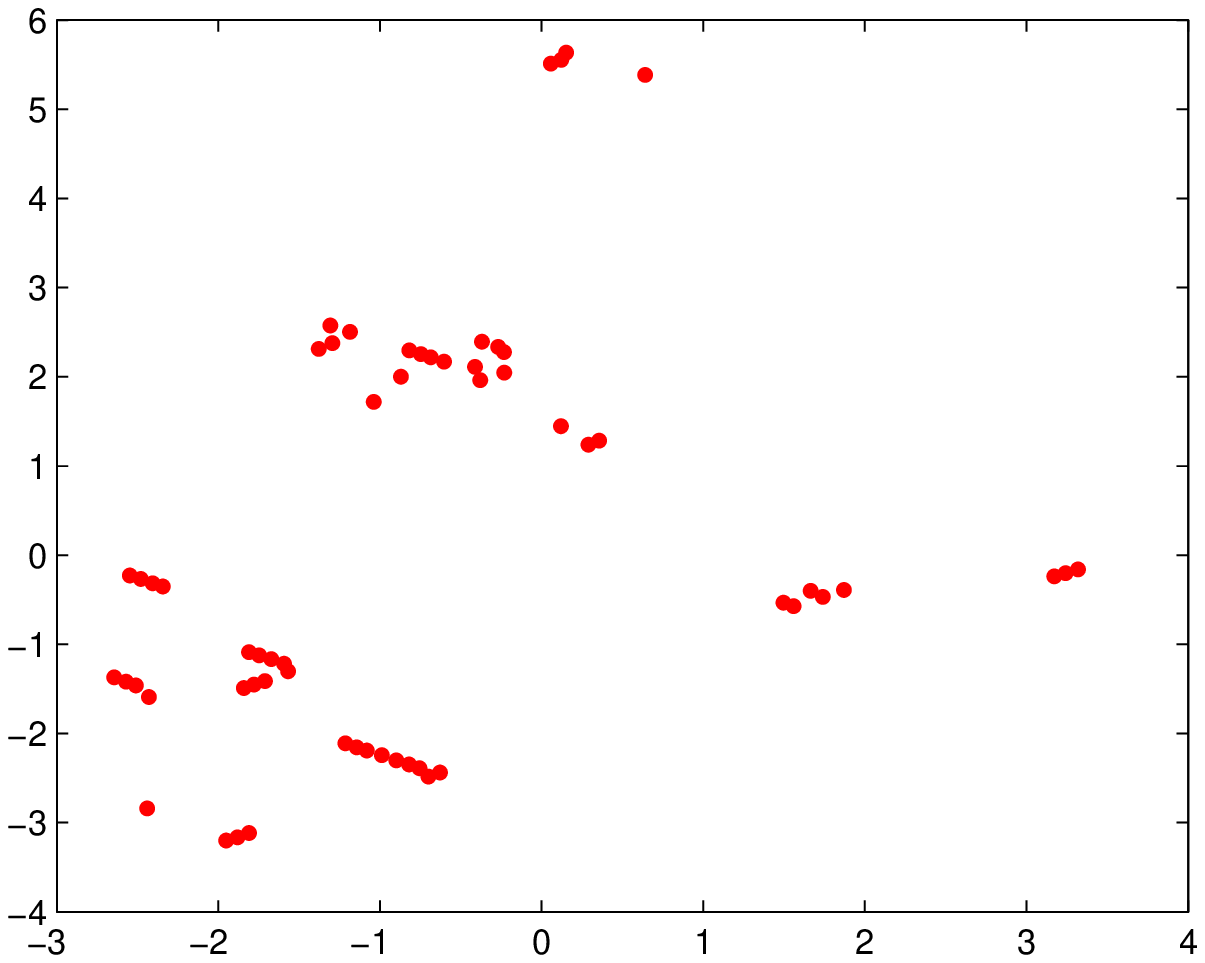}
\includegraphics[width=.49\columnwidth]{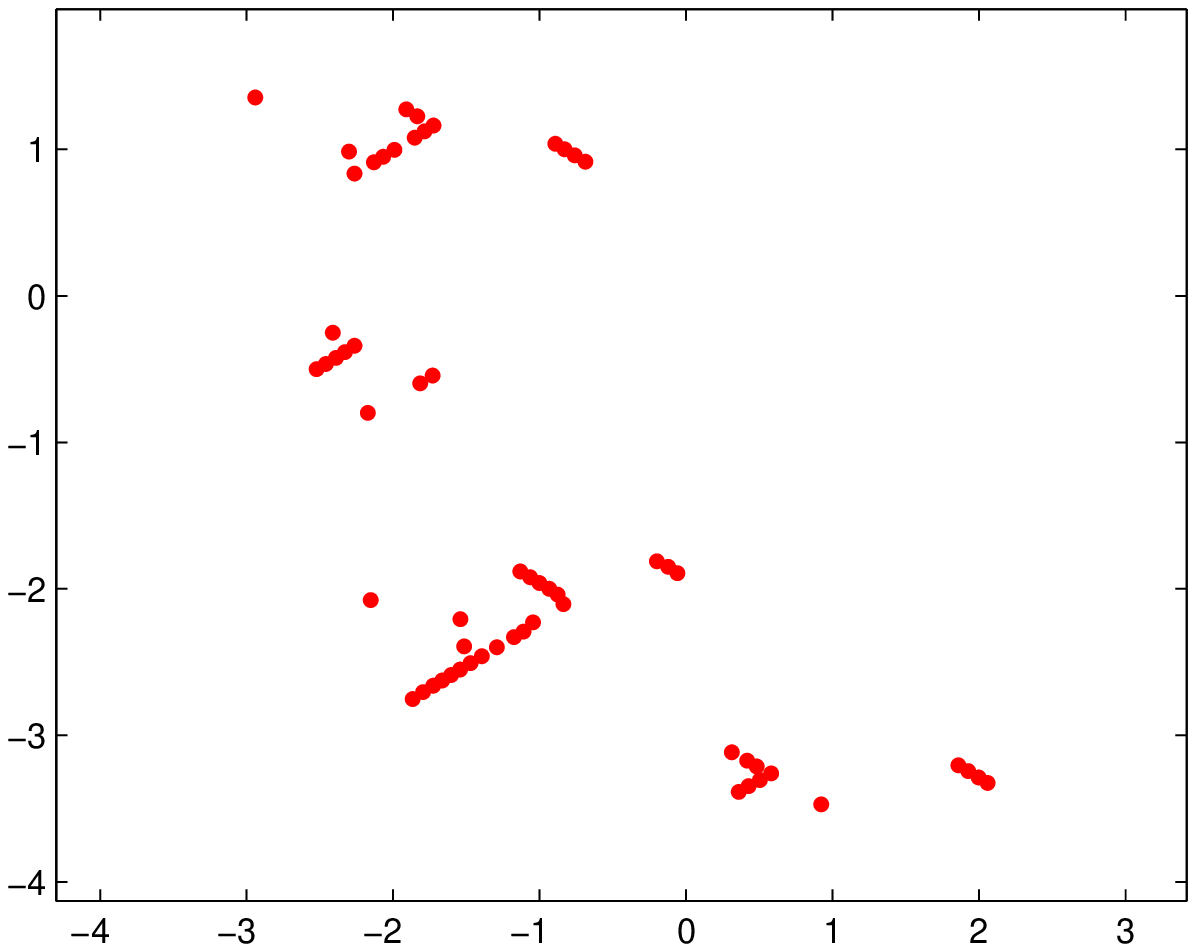}
\includegraphics[width=.49\columnwidth]{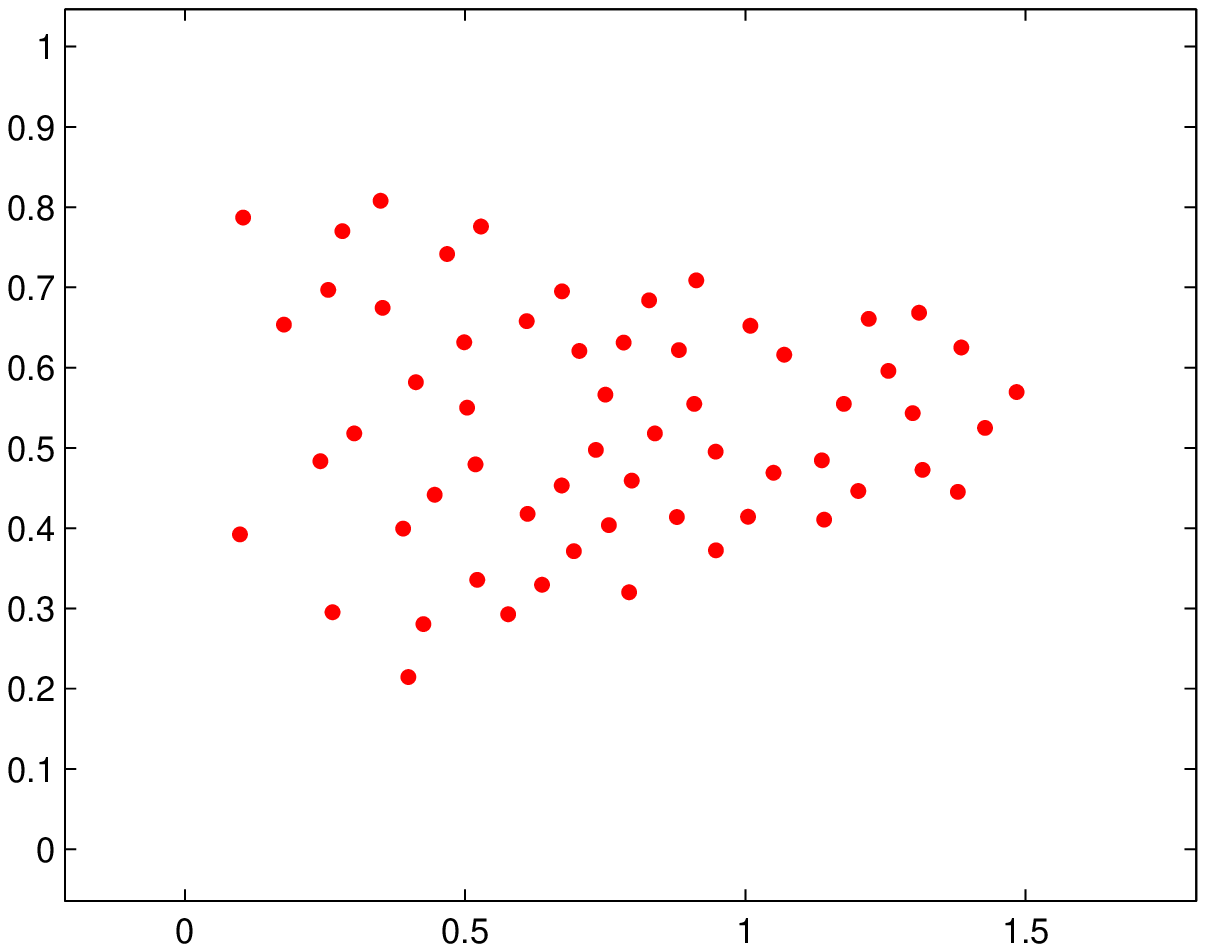}
  \caption{Configurations obtained with $n=1,3,7,N-1$, respectively, and $\alphar=60\degs$, $N=60$. The group is moving horizontally from left to right.}
  \label{fig:shots}
\end{figure}


\section{Analytical results}\label{sec:analysis}
In this section we develop a framework for the analysis of the presented model which helps the interpretation of the numerical results.
Several analytical tools have been developed and used in literature for the analysis of flocking algorithms.
The typical method for their analysis consists in defining a suitable potential function, called ``virtual'' or ``artificial'' potential, whose gradient gives the dynamics. Hence a well developed theory on potential systems can be used to make a full mathematical analysis, see e.g.~\cite{YL-RL-LEK:08,AM-LE-LB-AS:03}.
We have anticipated in the introduction that our model has two main features: {\em state-dependent switching}, and {\em asymmetry of interactions}.
Let us illustrate them. It is clear from their definition that the attraction and repulsion neighborhoods $\creg{}{n}$ and $\rreg{}{n}$ depend in a discontinuous way on the configuration (i.e. on the positions of the animals). As a consequence, the system's evolution switches among a finite collection of equations.
From a system-theoretical point of view, equation~\eqref{eq:system-adim} is a switching system~\cite{DL:03} with state-dependent switches. 
These issues have already been taken into account in flocking studies, as many papers consider the case of animals which interact when they are closer than a certain threshold. See, among others,~\cite{HGT-AJ-GJP:07}. The virtual potentials approach can be extended to these problems, provided the potential is allowed to be non-differentiable at the switching points. However, our model has the distinctive feature that, because of the limitation in the number of neighbors and in the shape of the sensitivity zones, interactions need not be symmetric. Example giving, for two animals $i,j$, the inclusion $j\in\creg{i}{n}$ does not imply that $i\in\creg{j}{n}$. This fact prevents us from using a virtual potentials approach, as the operation of differentiating the potential does not keep any directionality information. Actually, in literature there is no general approach available for systems with directed interactions. A partial asymmetry has been taken into account in other works~\cite{HS-LW-TC:06}, but the given treatment is far for being sufficient for our purposes.
In what follows we lay down the basics of a theory that we believe is able to catch the specific features of our model.

Let us start by defining solutions of a switching system in an useful sense. Indeed, a differential equation of the form ${\dot x=f(x)}$ with a discontinuous right-hand-side can not have a solution in the classical sense, i.e. a solution which is differentiable. Let us consider the right-hand side of equation~\eqref{eq:system-adim}.
That expression is not defined on the set of {\em degenerate configurations}, that is
\begin{equation*}
\CoincSet=\setdef{x\in (\real^2)^N}{\exists \,i,j \in \until{N} \text{ s.t. } x_i=x_j}.
\end{equation*}
Moreover, it is not always well defined on the set of {\em switching configurations}, in which two or more agents are equidistant from another,
\begin{equation*}
\DiscSet=\setdef{x\in (\real^2)^N}{\exists \,i,j,k \in \until{N} \text{ s.t. } \|x_i-x_j\|=\|x_i-x_k\|},
\end{equation*}
because of the ambiguity in the definition of the ``$n$ closest neighbors''.

To have a proper definition, we shall consider the following differential equation
\begin{equation}\label{eq:system-abstract}
\dot x(t)=f(x(t)),
\end{equation}
where the flow $\map{f}{(\real^2)^N\setminus (\DiscSet\union\CoincSet)}{(\real^2)^N}$ is defined componentwise as in equation~\eqref{eq:system-adim},
$$ f_i(x)=\sum_{j\in \creg{i}{n}} (x_j-x_i)  -  \xi^2 \sum_{j\in \rreg{i}{n}} \frac{(x_j-x_i)}{\|x_j-x_i\|^2}\qquad\forall i\in\until{N}.$$
Note that $f$ can not be extended with continuity to the set $\DiscSet\union\CoincSet$. Hence, a solution involving, for instance, two animals equidistant from a third one, can not be defined in the classical sense. In what follows, we shall extend the definition of the solutions of equation~\eqref{eq:system-abstract} to include the set $\DiscSet$. For such extension, we shall follow the approach in~\cite{AFF:88}, which requires defining a suitable {\em differential inclusion}, derived from~\eqref{eq:system-abstract}.

To this goal, let $B(y,\delta)$ denote the Euclidean ball of radius $\delta$, centered in $y$, and set
$$\K f(y)=  \bigintersect_{\delta>0}\bigintersect_{\mu(\Lambda)=0}\co\left\{f(B(y,\delta)\setminus \Lambda)\right\},$$
where the operator $\co$ denotes closed convex hull, and $\mu$ denotes Lebesgue measure.
The map $\map{x}{\realnonnegative}{(\real^2)^N}$ is said to be a {\em Filippov solution} of the system~\eqref{eq:system-abstract} if it is absolutely continuous and it satisfies the differential inclusion
$$\dot x(t)\in \K f(x(t)) $$
for almost any $t>0.$

From now on, we restrict ourselves to the case in which $\alphac=360\degs$, $\alphar=360\degs$, and $n=1$. Hence, each animal interacts just with its closest mate, and $\creg{i}{1}=\rreg{i}{1}$ for every $i$. We make this assumption because, while the analysis for this case is simpler than for the general one, still the significant features of switching and asymmetry are apparent. Indeed, provided $N>2$, the relation of ``being the closest to'' needs not to be symmetrical. Moreover, simulations show for this case the formation of regular structures, as in Figure~\ref{fig:cluster-views}, which can be of intrinsic interest.

In the case we are considering, it is useful to define the set of the closest neighbors of a given animal $i\in\until{N}$ as
$$\closest{i}(x)=\arg\min\limits_{j\neq i}\{\|x_i-x_j\|\}.$$
Notice that $\closest{i}(x)$ may be multivalued when $x\in\DiscSet$, and let $\card{\closest{i}(x)}$ denote its cardinality, that is the number of closest neighbors of animal $i$.

Given the above definitions, we are able to prove the existence of Filippov solutions.
\begin{theorem}[Existence]\label{th:existence} Let $n=1$. Then, for any initial condition $x^0\in(\real^2)^N\setminus\CoincSet$, equation~\eqref{eq:system-adim} has at least one Filippov solution $x$, such that $x(0)=x^0$ and $x(t)\in (\real^2)^N\setminus\CoincSet $ for every $t>0$.
\end{theorem}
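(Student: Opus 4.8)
The plan is to work on the open set $\Omega := (\real^2)^N\setminus\CoincSet$ and to build solutions in three moves: verify that the Filippov set-valued map $\K f$ is regular enough for the classical existence theorem of~\cite{AFF:88} to yield \emph{local} solutions, then extend any such solution to all positive times by excluding the only two ways a maximal Filippov solution can cease to exist, namely escape to infinity in finite time and collision (the solution approaching $\CoincSet$). The last exclusion is where the structure of the model---specifically the $1/\|\cdot\|$ blow-up of the repulsion---does the work, and it is also the technical heart of the argument.

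First I would record the regularity of $f$ on $\Omega$. Off the measure-zero set $\DiscSet$ the map $f$ is continuous; on all of $\Omega$ it is measurable and \emph{locally bounded}, because the only singularities of the right-hand side of~\eqref{eq:system-adim} occur on $\CoincSet$ (the repulsion terms $\xi^2(x_j-x_i)/\|x_j-x_i\|^2$ blow up only as two animals coincide), whereas near a switching configuration in $\DiscSet$ the vector $f$ merely jumps between finitely many bounded values. Consequently $\K f$ has nonempty, compact, convex values and is upper semicontinuous on $\Omega$, and the existence theorem for differential inclusions with such right-hand sides provides, for every $x^0\in\Omega$, a local absolutely continuous Filippov solution with $x(0)=x^0$. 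Next I would rule out finite-time blow-up of the positions: along a solution staying in $\Omega$ each $f_i$ is the sum of an attraction term of magnitude at most $\|x_{\closest{i}(x)}-x_i\|\le 2\max_k\|x_k\|$ and a repulsion term that is bounded above once the minimum distance is bounded below, so $f$ obeys a linear growth bound on any region bounded away from $\CoincSet$ and a Gronwall estimate forbids divergence of $\|x(t)\|$ in finite time. Thus a maximal solution can only fail to be global by approaching $\CoincSet$.

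The crux is therefore to show that the minimum inter-animal distance $d(t):=\min_{i\ne j}\|x_i(t)-x_j(t)\|$ stays bounded away from $0$. The key computation is the two-body one: if $i$ and $j$ are mutually each other's unique closest mate at distance $r$, then along~\eqref{eq:system-adim} one finds $\tfrac{d}{dt}r^2=4(\xi^2-r^2)$, so $r^2$ is driven toward the comfortable value $\xi^2$ and in particular increases whenever $r<\xi$; this yields a strictly positive barrier and forbids collision of that pair. I would upgrade this to the $N$-body setting by taking Dini derivatives of the locally Lipschitz function $d(t)^2$ along the Filippov solution: for a.e.\ $t$ the minimizing pair $(i,j)$ realizes $d$, each animal is among the other's closest neighbors, and all competing closest neighbors sit at distance exactly $d$. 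The expected conclusion is that when $d<\xi$ one has $D^+d(t)^2\ge 0$, so $d(t)\ge\min\{d(0),\xi\}>0$ on every finite interval, which is exactly the asserted invariance $x(t)\in(\real^2)^N\setminus\CoincSet$ for every $t>0$.

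I expect the main obstacle to be precisely the switching configurations in $\DiscSet$: there the Filippov velocity $\dot x_i\in\K f_i(x)$ is a convex combination of the interactions with several equidistant closest neighbors, so the clean two-body cancellation no longer applies verbatim and one must control the cross terms. The saving geometric fact is that, at the global minimum distance $d$, any two closest neighbors of a given animal are at least $60\degs$ apart (their mutual distance is $\ge d$ while both lie on the circle of radius $d$ about that animal), which bounds the relevant cosines by $1/2$; combining this bound at both endpoints of the minimizing pair is what must be shown to keep the Dini derivative nonnegative. Making this convex-combination estimate airtight---rather than merely invoking the generic two-body case---is the delicate step.
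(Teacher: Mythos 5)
Your overall architecture coincides with the paper's own proof: the paper likewise (i) establishes that $f$ is piecewise continuous on $(\real^2)^N\setminus\CoincSet$ (via a decomposition into cells $E_v$ indexed by closest-neighbor assignments), concludes that $\K f$ has nonempty, bounded, closed, convex values and is upper semicontinuous, and invokes Filippov's existence theorem; and (ii) keeps the solution away from $\CoincSet$ by taking a pair realizing the minimum inter-animal distance, observing that its members are each other's closest neighbors, and arguing that below distance $\xi$ repulsion exceeds attraction, so the pair separates. Your two-body identity $\tfrac{d}{dt}r^2=4(\xi^2-r^2)$ is exactly the quantitative version of the paper's qualitative claim, and your Gronwall/no-escape-to-infinity step is a sensible addition that the paper leaves implicit. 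Up to that point the proposal and the paper match step for step.

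The genuine gap is in the step you yourself flag as delicate, and the patch you sketch would not close it. The $60\degs$ separation gives an \emph{upper} bound $\cos\theta\le 1/2$ on the cross terms, but what you need is a \emph{lower} bound on the Dini derivative, and competing closest neighbors may sit at obtuse angles, with cosines as low as $-1$. Concretely, take $N\ge 4$ with four animals collinear at positions $-d,0,d,2d$, $d<\xi$, all others far away. The assignment in which each of the two middle animals flees its \emph{outer} neighbor is an admissible vertex of $\K f$ (it is a limit of $f$ from nearby nondegenerate configurations, obtained by shifting the middle animals slightly outward), and under this selection the middle pair, which realizes the minimum distance, approaches at rate $\tfrac{d}{dt}\|x_h-x_k\|^2=-4(\xi^2-d^2)<0$. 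Hence the pointwise claim ``$d<\xi$ implies $D^+d^2\ge 0$ for every selection in $\K f$'' is false, and no single-time convex-combination estimate can rescue it. What a complete proof must show instead is that such adversarial selections cannot act on a set of times of positive measure: they instantly destroy the equidistances that make them admissible (in the example the middle pair immediately becomes a mutually unique closest pair, after which repulsion separates it), i.e.\ they are not sliding modes; equivalently, at a.e.\ time along a Filippov solution every active minimizing pair must have squared-distance derivative equal to $\tfrac{d}{dt}d^2$, and these sliding constraints have to be brought into the estimate. For fairness, note that the paper's own proof does not do this either: it silently treats the minimizing pair as mutually unique closest neighbors, which is precisely the generic two-body case your computation already covers. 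So your proposal reproduces the paper's argument and is more candid about where it is incomplete, but the $\cos\le 1/2$ device is the wrong direction, and the switching/sliding case remains open in both your attempt and the paper.
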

\begin{proof}
We remark that $f$ is piecewise continuous in the following sense.
Let
\begin{equation}\label{eq:LabelSet}
\V=\setdef{v\in\until{N}^N}{\forall i\in\until{N},\, v_i\neq i}.
\end{equation}
For any $v\in\V$, let us also define the open, possibly empty, set
$$E_v=\setdef{x\in(\real^2)^N\setminus\CoincSet}{\forall i\in\until{N},\, \|x_{v_i}-x_i\|<\|x_{k}-x_i\|, k\notin\{i,v_i\} }$$
which is the set of the configurations such that $v_i$ is $i$'s closest neighbor.
Note that $E_v\intersect E_u=\emptyset$ if $u\neq v$ and the measure of the boundary of each $E_v$ is zero. Moreover, denoting by $\bar E_v$ the closure of $E_v$
in the induced topology of $(\real^2)^N\setminus\CoincSet$, it holds that $\bigcup_{v\in \V} \bar E_v=(\real^2)^N\setminus\CoincSet$.
In the interior of each ``piece'' $E_v$, that is if $x\in E_v$, we have that $f(x)=f^v(x)$, where the function $f^v$ is defined componentwise as
$$f^v_i(x)=\left(x_{v_i}-x_i\right)-\xi^2 \frac{x_{v_i}-x_i}{\|x_{v_i}-x_i\|^2}.$$
Moreover, each function $f^v$ is continuous in $\bar E_v$.
These facts imply that for every $x\in (\real^2)^N\setminus\CoincSet$, the set $\K f(x)$ is bounded, nonempty, closed and convex, and the map $x \mapsto \K f(x)$ is upper semicontinuous.
Before we conclude, we need to show that, provided $x(0)\in (\real^2)^N\setminus\CoincSet$, the solution can not reach $\CoincSet$ either in finite time or asymptotically.
By contradiction, let
\begin{equation}\label{eq:min}\lim_{t\to L} \min_{h,k\in\until{N}}{||x_h(t)-x_k(t)||}=0,
\end{equation}
for $L\in (0,\infty].$
Note that for each $t\in[0,L)$, there exist a pair $(h^*,k^*)\in \until{N}^2$, possibly depending on time, such that it attains the minimum in~\eqref{eq:min}, and thus ${h^*}=\closest{{k^*}}(x(t))$ and ${k^*}=\closest{{h^*}}(x(t))$.
By continuity of the solution, it exists $t_0\in(0,L)$ and $\eps\in (0,\xi)$ such that for $t\in [t_0,L)$, we have $||x_{h^*}(t)-x_{k^*}(t)||\le \eps<\xi$. This implies that repulsion is larger than attraction, and then the animals are moving away from each other, that is $||x_{h^*}(t)-x_{k^*}(t)||$ is increasing in $t$. This contradicts equation~\eqref{eq:min}.

At this point, we can apply~\cite[\S7~Theorem~1]{AFF:88}, 
and obtain that the differential inclusion $\dot x(t)\in \K f(x(t))$ has at least one solution $x(t)$, for all $t>0$ and for any initial condition $x^0\in(\real^2)^N\setminus\CoincSet.$
\end{proof}
\begin{remark}
The above proof can be extended to the case $n>1$ and ${\alphac<360\degs}$, ${\alphar<360\degs}$, modulo a suitable redefinition of the ``pieces'' $E_v$, in order to account for the more complex neighborhood relationships: the notational setup would be cumbersome, and we do not detail it.
\end{remark}

\medskip
Loosely speaking, we would expect that a configuration having the closest neighbor(s) at distance $\xi$ for all animals, as in the lattice configuration of Figure~\ref{fig:cluster-1} would be an equilibrium configuration. Indeed, each animal $i$ is driven by the attraction-repulsion force component $f_i$ towards keeping a distance $\xi$ from its neighbors. The following result technically clarifies this intuition. Before the statement, we need to define a configuration $x^*$ to be a {\em Filippov equilibrium} of $f$ when $0\in K\,f(x^*)$.
\begin{proposition}[Equilibria]\label{prop:6-equilibria}
Let $x^*\in(\real^2)^N\setminus\CoincSet.$
If for all $i\in\until{N}$ and for all $k\in \closest{i}(x^*)$, it holds $\|x^*_i-x^*_k\|=\xi$,
then $x^*$ is a Filippov equilibrium for the system~\eqref{eq:system-adim}, and moreover $1\le\card{\closest{i}(x^*)}\le 6$, for all $i\in \until{N}.$
\end{proposition}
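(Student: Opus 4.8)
The plan is to treat the two assertions separately, exploiting the piecewise structure of $f$ already established in the proof of Theorem~\ref{th:existence}. Recall that $(\real^2)^N\setminus\CoincSet$ decomposes (up to the measure-zero boundaries $\partial E_v$) into the open pieces $\{E_v\}_{v\in\V}$, on each of which $f$ agrees with the continuous field $f^v$.

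To prove that $x^*$ is a Filippov equilibrium, I would compute $\K f(x^*)$ directly from this decomposition. The key reduction is that, for a piecewise-continuous field whose pieces have null boundaries, the Filippov set at a point is the closed convex hull of the limiting values contributed by the finitely many pieces adjacent to that point, namely $\K f(x^*)=\co\{f^v(x^*) : v\in\V,\ x^*\in\bar E_v\}$. I then identify the adjacent pieces: passing to the limit in the strict defining inequalities of $E_v$ along a sequence $x^{(m)}\in E_v\to x^*$, one gets $\|x^*_{v_i}-x^*_i\|\le\|x^*_k-x^*_i\|$ for all $k\neq i$, so $x^*\in\bar E_v$ forces $v_i\in\closest{i}(x^*)$ for every $i$. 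For any such $v$ the hypothesis gives $\|x^*_{v_i}-x^*_i\|=\xi$ for all $i$, whence
\[
f^v_i(x^*)=\left(x^*_{v_i}-x^*_i\right)\Bigl(1-\frac{\xi^2}{\|x^*_{v_i}-x^*_i\|^2}\Bigr)=0 .
\]
Thus every adjacent piece contributes the zero vector, so $\K f(x^*)=\{0\}$ and in particular $0\in\K f(x^*)$, which is exactly the definition of a Filippov equilibrium.

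For the cardinality bound, the lower estimate $\card{\closest{i}(x^*)}\ge 1$ is immediate since each animal has at least one other group mate. The upper bound I would obtain by a planar packing estimate. First I show distinct closest neighbors of $i$ are themselves far apart: if $k,l\in\closest{i}(x^*)$ with $k\neq l$, then applying the hypothesis to animal $k$ (with any element of $\closest{k}(x^*)$) shows the minimal distance from $k$ to every other animal equals $\xi$, so in particular $\|x^*_k-x^*_l\|\ge\xi$. Next, since $\|x^*_i-x^*_k\|=\xi$ for each $k\in\closest{i}(x^*)$, all these neighbors lie on the circle of radius $\xi$ centred at $x^*_i$. Two of them subtending a central angle $\theta$ are at Euclidean distance $2\xi\sin(\theta/2)$, so the constraint $\|x^*_k-x^*_l\|\ge\xi$ forces $2\sin(\theta/2)\ge 1$, i.e. $\theta\ge 60\degs$. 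The consecutive angular gaps around $x^*_i$ sum to $360\degs$ and each is at least $60\degs$, so at most six neighbors fit, giving $\card{\closest{i}(x^*)}\le 6$.

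I expect the main obstacle to be the rigorous justification of the reduction $\K f(x^*)=\co\{f^v(x^*):x^*\in\bar E_v\}$ at a switching configuration $x^*\in\DiscSet$: one must verify that excising Lebesgue-null sets $\Lambda$ in the Filippov construction cannot remove any adjacent full-dimensional piece, and that as $\delta\to0$ no non-adjacent piece survives in $\bigcap_{\delta>0}\co\{f(B(x^*,\delta)\setminus\Lambda)\}$. Once this reduction to finitely many piecewise limits is in place, both the vanishing of each $f^v(x^*)$ and the hexagonal packing bound are routine, the latter being precisely the classical planar kissing-number estimate.
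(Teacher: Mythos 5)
Your proposal is correct, but the two halves compare differently with the paper's own proof. For the cardinality bound you do essentially what the paper does: the paper argues by contradiction (more than six points on a circle of radius $\xi$ forces two of them to be at distance less than $\xi$), while you run the same packing estimate positively via the $60\degs$ angular gaps; you also make explicit a step the paper leaves implicit, namely that two distinct $k,l\in\closest{i}(x^*)$ satisfy $\|x^*_k-x^*_l\|\ge\xi$ because the hypothesis applied to animal $k$ pins its minimal distance to every other animal at $\xi$. For the equilibrium half your route is genuinely different. The paper never computes $\K f(x^*)$ in full: it splits into the smooth case ($\card{\closest{i}(x^*)}=1$ for all $i$, where $\K f(x^*)=\{f(x^*)\}=\{0\}$) and the switching case, where it selects a \emph{single} label $v\in\V$ with $v_i\in\closest{i}(x^*)$ for all $i$, takes a sequence $x^l\to x^*$ inside $E_v$, observes $f(x^l)\to 0$, and concludes $0\in\K f(x^*)$; this uses only the easy inclusion — limits of values of $f$ along sequences lying in one full-dimensional piece belong to the Filippov set — and requires nothing about what the other pieces contribute. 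You instead invoke the exact characterization $\K f(x^*)=\co\{f^v(x^*) : x^*\in\bar E_v\}$, prove that \emph{every} adjacent label satisfies $v_i\in\closest{i}(x^*)$, and conclude $\K f(x^*)=\{0\}$. The price is precisely the reduction lemma you flag as the main obstacle: its harder direction (no non-adjacent piece survives the intersection, and null excisions cannot delete open pieces) is exactly what the paper's membership-only argument avoids having to prove. That lemma does hold here — finiteness of $\V$ gives a $\delta$ below which $B(x^*,\delta)$ meets only adjacent pieces, each $f^v$ is continuous up to $\bar E_v$, and the boundaries are null — so your approach is sound, and it buys two things the paper's does not: the stronger conclusion that zero is the \emph{only} Filippov velocity at $x^*$, and a uniform argument with no case split between smooth and switching configurations.
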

\begin{proof}
We consider $\closest{i}(x^*)$ for every $i\in \until{N}$, and distinguish the cases in which their cardinalities are equal to, or larger than 1. If $\card{\closest{i}(x^*)}=1$ for all $i\in\until{N}$, then $f$ is smooth at $x^*$, and hence $\K f(x^*)=\{f(x^*)\}=\{0\}$.
If instead there exists $h\in\until{N}$ such that $\card{\closest{h}(x^*)}>1$, then let us consider
the set $\V$ defined in~\eqref{eq:LabelSet}, and take $v\in \V$ such that $v_i\in\closest{i}(x^*),$ for every $i\in\until{N}$. Since $x^*$ is an accumulation point of $E_v$, let us consider a sequence $\{x^l\}_{l\in \natural}\subset E_v$, such that $x^l\to x^*$ as $l\to+\infty$. Hence, $f(x^l)\to 0$ as $l\to\infty$, and this implies, by the definition of the differential inclusion, that $0\in \K f(x^*).$

By definition, $\card{\closest{i}(x^*)}\ge 1$. The fact that $\card{\closest{i}(x^*)}\le 6$ can be shown by contradiction. Let $\card{\closest{i}(x^*)}=m$, with $m>6$, for some $i$. Then there are $m$ points of $\real^2$, representing positions, which belong to a circle of radius $\xi$ centered in $x^*_i$. But then the distance between two of them has to be less than $\xi$, which is a contradiction.
\end{proof}

Hexagonal lattice configurations, in which animals are at distance $\xi$ from their closest neighbor(s), are observed in simulations in the case $\alphac=\alphar=360\degs$ and $n=1$, as reported in Figure~\ref{fig:cluster-1}. Indeed, Proposition~\ref{prop:6-equilibria} shows that such a lattice is an equilibrium: loosely speaking, we can say that it is the most closely packed configuration among the equilibria pointed out by this result. Notice moreover that such lattice equilibria are actually switching configurations, belonging to the set $\DiscSet$: this gives an a posteriori justification of the effort that we have done for a careful extension of the solutions to this set.

%
%
%
%
\section{Discussion}\label{sec:discussion}
Our agent-based model has been conceived from assumptions which are widely accepted from the biological point of view, and it shows the onset of group structures and patterns which are observed in nature. In some sense, our model can be seen as a generalization of other similar models, since in the special case $(\alphar, \alphac)=(360\degs,360\degs)$ we recover results which are by now consolidated in literature. The novelty resides in that by modifying a small set of key parameters, we obtain other patterns, which are experimentally observed in animal groups. Our results suggest that apparently large differences in group patterns may arise just from differences in the attraction and repulsion sensitivity zones. In this perspective, we would like to point out that the group structure is not only a function of the species, but also of the external conditions. For instance, surf scoters~\cite{RL-LEK:09} and other animals can form either clusters or lines, depending on the environmental conditions. This suggests that $\alphar$ and $\alphac$ are not only animal-dependent, but they can also vary depending on the type of motion, environmental conditions, presence of predators, and aim of the displacement.
In this paper, the choices of the angles were made to explore the model's properties and they do not come from experimental data. Nevertheless, the correspondence of the simulated patters with those experimentally observed suggests that future research should distinguish between the (variable) sensitivity zones and the (fixed) visual field, as in~\cite{SG-SAL-DIR:96,AN-VCB:08}.

Now we review the results obtained in the previous sections. The goal is to propose some biological insights, especially about the role of anisotropy in the interactions.
As we did presenting the simulations, we distinguish three cases: isotropic attraction and repulsion; anisotropic attraction and repulsion; isotropic attraction and anisotropic repulsion.

In the first case, with completely isotropic interactions, our model produces clusters of individuals. Clusters are common for small birds (e.g., starlings~\cite{AC-IG:08e}) and for fish, whose sensing abilities allow an almost complete perception around them. Moreover, these animals often change the direction of motion following complex trajectories, and this makes useful to keep under control all the space around them. By means of isotropic interactions, they are able to change rapidly the direction of motion of the whole group, with no need to significantly rearrange the shape of the group. This would not be possible in less symmetric formations like lines or Vees.

Instead, when interactions are not isotropic, also the obtained formations are not isotropic, and we observe that their onset depends on spontaneous leader-following mechanisms.
When both repulsion and attraction are focused in front, the leader-following mechanism induces the formation of lines. Lines are commonly observed in slow-moving animals as lobsters, elephants and penguins. The small repulsion angle $\alphar$ appears to be crucial to obtain such a pattern (see Figure~\ref{fig:elongation}). This could be related to the fact that such animals keep a steady direction of motion, and, once the line is established, repulsion needs to be active only against the forerunners.
The choice $\alphac=180\degs$ can also be significant from the biological point of view. Indeed, this would mean that individuals pay more attention to the group mates in front, while they do not respond to what happens behind them. For example, they would not perceive a disconnection of the group. Hence, a model assuming $\alphac=180\degs$ seems suitable either for animals with a restricted frontal visual field or for animals which are not particularly interested in the cohesion of the whole group. 

A different leader-following mechanism induces V-like formations when attraction is isotropic and repulsion is restricted to the group mates in front. Our simulations reproduce several significant features of natural V-like formations, described in the experimental literature~\cite{LLG-FHH:74,FHH:74,FRH:87}. First, echelons and J-like formations are as common as perfect Vees. In our model all these formations appear, and actually the behavior of single individuals does not depend on the global shape of the formation. Second, V-like formations are not stable, but rather they often disband and quickly reform.  Third, in our model each Vee is made of a limited number of individuals (see Figure~\ref{fig:Vee}), independently of the total number of animals $N$. In other words, increasing $N$ leads to the formation of a larger number of V-like groups, but not to larger groups.
This fact has been experimentally observed~\cite{FRH:87} and explained~\cite{PS-AP-JKH:03} resorting to the argument of string instability, that we introduced in Section~\ref{subsec:lines}. Our results should be compared to those in~\cite{AN-VCB:08}, where authors propose an ad-hoc formation algorithm based on aerodynamic arguments in which the number of V-like groups is constant for increasing $N$.

The function of V-like formation has been greatly investigated (see for instance~\cite{FRH:87,CC-JS:94,JRS-DB:98,HW-JM-YC-PA-SJ:01,PS-AP-JKH:03} and references therein). Two hypothesis are the most considered: aerodynamic advantage, and visual communication advantage.
The former is based on the fact that each flying individual creates an upwash region behind it, just off the tips
of its wings, so that another individual can benefit placing itself in that region.
The latter is instead based on the fact that flying in a skewed position with respect to the bird in front is useful to avoid collisions, and allows an unhindered visual communication with all the group mates~\cite{CC-JS:94}.
Our results appear to support the hypothesis of visual communication advantage, since V-like formations are not obtained imposing individuals to stay in upwash regions. Instead, they are obtained just from frontal repulsion and, for the sake of group cohesion, isotropic attraction.

Motivated by some literature~\cite{IA:80,AH-CW:92,AC-IG:08a}, we have also investigated the dependence of the configurations on $n$, the number of interacting neighbors.
We have seen that cluster configurations can be obtained with any $n$, but the value of $n$ influences the internal structure of the cluster. Moreover, it appears that an intermediate number of neighbors is preferable to obtain lines or V-like formations. These results seem to confirm the reasonableness of topological interactions with a limited number of neighbors. However, this point deserves further investigations.


\subsection*{Future research}
We are keen on developing our research in three respects. First, it should be noted that although the presented model is bidimensional, it can readily be extended to a three-dimensional one. We are planning to study a 3D version of our model since the novel set of simulations might show interesting phenomena. We are especially interested in studying the effect of the anisotropy in 3D V-like formations, because the skewed formations described in~\cite{FHH:74} are inherently three-dimensional.

Second, we want to further develop the theoretical analysis of the model, in order to include a thorough description of the equilibria of~\eqref{eq:system-simulated}, and their stability analysis in the suitable switching systems framework. Moreover, we would be interested in a variational interpretation of the model. Indeed, while it is apparent that each animal is trying to minimize a ``private'' potential depending on its neighbors, it is unclear whether and when this would result in a configuration minimizing some global objective function.

Third, we believe that the simplicity of the interaction rules we have proposed, and the limited number of group mates to be kept into account, can be interesting features for engineering applications. In particular, we will investigate the design of robust algorithms for environmental deployment of robots or sensors and the formation cruising of unmanned vehicles.

\bibliographystyle{plain}

\end{document}